\def\matR{\mathbb{R}}
\def\O{\Omega}
\def\bu{\textit{\textbf{u}}}
\def\bv{\textit{\textbf{v}}}
\def\bw{\textit{\textbf{w}}}
\def\bf{\mathbf{f}}
\def\bg{\mathbf{g}}
\def\bn{\mathbf{n}}
\def\bF{\mathbf{F}}
\def\bP{\mathbf{P}}
\def\bL{\mathbf{L}}
\def\bH{\mathbf{H}}
\def\bW{\mathbf{W}}
\def\bV{\mathbf{V}}
\def\bX{\mathbf{X}}
\def\div{\textup{div} \ }
\def\matF{\mathcal{F}}
\def\matT{\mathcal{T}}
\def\matC{\mathcal{C}_{NL}}
\def\eps{\varepsilon}
\theoremstyle{thmstyleone}
\newtheorem{theorem}{Theorem}[section]
\newtheorem{corollary}{Corollary}[section]
\newtheorem{lemma}[theorem]{Lemma}
\newtheorem{prop}[theorem]{Proposition}
\theoremstyle{thmstylethree}
\theoremstyle{definition}
\theoremstyle{thmstyletwo}
\newtheorem{remark}[theorem]{Remark}
\newtheorem{assumption}{Assumption}
\begin{document}

\title[Title]{Finite element approximation of the stationary Navier-Stokes problem with non-smooth data}

\author[1]{\fnm{María Gabriela} \sur{Armentano}}

\author*[1]{\fnm{Mauricio} \sur{Mendiluce}}\email{mmendiluce@dm.uba.ar}

\affil[1]{\orgdiv{Departamento de Matemática}, \orgname{Facultad de Ciencias Exactas y Naturales, Universidad de Buenos Aires, IMAS - Conicet}, \orgaddress{\city{Buenos Aires}, \postcode{1428}, \country{Argentina}}}

\abstract{The aim of this work is to analyze the finite element approximation of the two-dimensional stationary Navier-Stokes equations with non-smooth Dirichlet boundary data. The discrete approximation is obtained by considering the Navier-Stokes system with a regularized boundary data. Based on the existence of the {\it very weak solution}, for the Navier–Stokes system with $L^2$ boundary data, and a suitable decomposition of this solution, we obtain a priori error estimates between the approximation of the Navier-Stokes system with  non-smooth data and the finite element solution of the associated regularized problem. These estimates allow us to conclude that our approach converges with optimal order.}

\keywords{Navier-Stokes equations, non-smooth data, finite elements}

\pacs[MSC Classification]{65N15, 65N30}

\maketitle

\section{Introduction}\label{sec1}

The numerical approximation of incompressible flows with non-smooth boundary data is a recurring challenge in computational fluid dynamics. In many physically relevant situations, such as internal flows with sharp corners or driven boundary conditions, the prescribed boundary velocity lacks the regularity typically assumed in theoretical and numerical analyses. A prototypical example is the lid-driven cavity flow, which remains a canonical benchmark for numerical methods for the Stokes and  Navier–Stokes equations.

In this work, we analyze the finite element approximation of the stationary Navier-Stokes problem with non-smooth Dirichlet data. When Dirichlet boundary data are sufficiently regular, the stationary Navier–Stokes equations admit weak solutions in the standard Sobolev spaces, and their finite element approximation is well understood (see, for example, \cite{wang2016,gunzburger1983,fix1983,zheng} and the references therein). However, if the boundary data belong only to $L^2$, the classical variational formulation breaks down. This lack of regularity prevents a direct application of standard finite element theory and raises fundamental questions regarding the convergence and accuracy of numerical approximations.

Our approach is based on a regularization of the boundary data combined with the notion of {\it very weak solutions} for the original problem. More precisely, the non-regular boundary data are approximated by a family of smooth functions, leading to a regularized Navier–Stokes problem that admits a classical weak solution. The finite element method is then applied to this regularized problem.

In recent years, several works have addressed elliptic and linear fluid problems with low-regularity boundary data. Finite element approximations for the Poisson problem under $L^2$ boundary conditions were studied in \cite{apel2016, apel2017}. The error estimation for the two-dimensional Stokes driven cavity problem was analyzed in \cite{CAI2009}. More recently, \cite{duran2020, hamouda2017} investigated this issue for the Stokes problem, and \cite{CAI2023} addressed it for linear elliptic problems.

Extending these ideas to the Navier–Stokes equations is non-trivial due to the presence of the nonlinear convective term, which prevents a straightforward generalization of the linear theory.

First, we introduce the notation that will be used throughout the paper. Let $s\geq0$ be a real number, $1\leq p\leq \infty$ and $D\subset\matR^2$ be an open bounded set, we denote $W^{s,p}(D)$ to the standard space of Sobolev in $D$, with $\|\cdot\|_{W^{s,p}(D)}$ and $|\cdot|_{W^{s,p}(D)}$ its norm and seminorm respectively (for more details, see, for instance, \cite{Adams1975}).
The space $W^{s,p}_0(D)$ is defined as the closure of $C^{\infty}$ functions with compact support in $D$, i.e, the clousure of $C^{\infty}_0 (\bar{D})$ in $W^{s,p}(D)$. For $p=2$ we write $H^s(D)=W^{s,2}(D)$ and $H^s_0(D)=W_0^{s,2}(D)$. $W^{-s,q}(D)$ is defined as the dual space of $W_0^{s,p}(D)$ with $\frac{1}{p}+\frac{1} {q}=1$. In $H^{-1}(D)$ we define the norm  $$\|v\|_{H^{-1}(D)}=\sup_{w\in H^1_0(D), w\neq 0}\frac{\int_{D}v\cdot w}{|w|_{H^1(D)}}.$$ 

On the other hand we will say that a function $f(x) = O(g(x))$ if there exists a constant $C>0$ such that $f(x)\leq Cg(x)$. As usual, we use bold characters to denote vector-valued functions and their corresponding functional spaces.

Let $\O\subset\matR^2$, be a Lipschitz domain with boundary $\Gamma=\partial\O$ and let $\bf\in\bH^{-1}(\O)$ and $\bg\in \bL^2(\Gamma)$ with 
    \begin{equation}\label{cond-compatibilidad}
        \int_{\Gamma}\bg\cdot\bn=0,
    \end{equation}
where $\bn$ denotes the outward unit vector normal to the boundary. The stationary Navier-Stokes problem is find $(\bu,p)$ such that:

\begin{equation}\label{ec-principales} \left\{
	 \begin{array}{rcl}
		-\nu\Delta\bu+(\bu\cdot\nabla)\bu+\nabla p  &= \bf & \textup{ in } \, \O \\
		\div\bu  &=0 & \textup{ in } \, \O \\
            \bu &=\bg & \textup{ on } \, \Gamma
	\end{array} \right.
\end{equation}

where $\bu = (u_1, u_2)$ is the velocity of the fluid, $p$ is the pressure, $\bf = (f_1, f_2)$ is the imposed body force, and $\nu > 0$ represents the viscosity.

\bigskip

If we assume  $\bg \in \bH^{1/2}(\Gamma)$, the typical approach to solving the problem using finite elements is to multiply the equations by test functions and integrate by parts, leading to a suitable variational form in the standard spaces $\bH^1(\O)$ for the velocity $\bu$ and $L_0^2(\O)$ for the pressure $p$ (see, for example, \cite{wang2016,fix1983,gunzburger1983}). The challenge arises when $\bg \notin \bH^{1/2}(\Gamma)$, as in this case the solution $\bu \notin \bH^1(\O)$, and thus the standard theory cannot be applied.

The approach to solving this problem is consider a regular approximation function of $\bg$, let us call $\bg_{\eps}$, and then compare the solution of the  problem \eqref{ec-principales}  with the numerical solution of the regular problem:

\begin{equation}\label{NV_dato_ge} \left\{
	 \begin{array}{rcc}
		-\nu\Delta\bv+(\bv\cdot\nabla)\bv+\nabla q  &= \bf & \textup{in } \, \O \\
		\div\bv  &=0 & \textup{in } \,\O \\
            \bv &=\bg_{\eps} & \textup{on } \,\Gamma
	\end{array} \right.
\end{equation}
Although a similar approach is given in \cite{CAI2009,duran2020,hamouda2017} for the Stokes problem,  the
presence of the nonlinear convective term in the Navier-Stokes equations introduces extra difficulties and requires a different treatment. To analyze the approximation, we first study the existence of solution to the problem \eqref{ec-principales} for a non-regular boundary data through the concept of a \textit{very weak solution}. To this end, we employ an useful decomposition of the solution given in \cite{marusic2000}.  This decomposition allows us to show that the difference between the \textit{very weak solution} of \eqref{ec-principales} and the solution of \eqref{NV_dato_ge} can be controlled by the approximation error between the non-regular data $\bg$ and the regularized data $\bg_{\eps}$. Then, we approximate the classical solution  of \eqref{NV_dato_ge} by finite elements and show the a priori error estimates under approriate assumptions of the finite element spaces and the meshes. 

The paper is organized as follows:  In section \ref{sec2: preliminares} we present some preliminary results, among which are the result of existence of a solution for problem \eqref{NV_dato_ge} and the existence of a \textit{very weak solution} of \eqref{ec-principales}. In section \ref{sec3: errores de aproximacion} we analyze the error between the \textit{very weak solution} of \eqref{ec-principales} and the weak solution of \eqref{NV_dato_ge}. In section \ref{sec4: finite elements} we study finite element approximations by solving problem \eqref{NV_dato_ge} and present a priori error estimates depending on the approximation of $\bg$ by a regularizing function. Finally, in section \ref{sec5: numerical experiments}
we analyze several numerical aspects
concerning the solution
of the discrete problem and present numerical experiments for the cavity flow problem that confirm the theorical prediction.

\section{Preliminaries}\label{sec2: preliminares}

In this section we introduce some notations and results that we will use throughout the paper. 

For our analysis, we first consider the following problem with regular data: 

\begin{equation}\label{prob_regular}
	 \left\{\begin{array}{rcc}
		-\nu\Delta\bv+(\bv\cdot\nabla)\bv+\nabla q  &= \bf & \textup{in } \,\O \\
		\div\bv  &=0 & \textup{in } \,\O \\
            \bv &=\bv_D & \textup{on } \, \Gamma
	\end{array}\right.
\end{equation}   
with $\bf\in\bH^{-1}(\O)$ and $\bv_D\in\bH^{1/2}(\Gamma)$.

A variational form of problem \eqref{prob_regular} consists in finding a pair $(\bv,q)\in \bH^1(\O)\times L_0^2(\O)$ solution of the equations, such that, $\bv|_\Gamma=\bv_D$ and

  \begin{equation}\label{ec_prueba_exist}
      \left\{\begin{array}{rll}
         a(\bv,\bw)+c(\bv,\bv,\bw)-b(\bw,q) &=\int_\O \bf\cdot\bw  & \forall\bw\in\bH^1_0(\O) \\
         b(\bv,t) &= 0  & \forall t\in L_0^{2}(\O) \\  
         \end{array}\right.
  \end{equation}

with
    $$a(\bv,\bw) = \nu\int_\O\nabla\bv:\nabla\bw, \quad b(\bw,q) = \int_\O q\div\bw, \, {\mbox{ and }}\, c(\bu,\bv,\bw) = \int_\O (\bu\cdot\nabla)\bv\cdot\bw .$$

Throughout the paper, we will denote by $C$ a generic positive constant, not necessarily the same at each occurrence. Now we present some useful  results \cite{girault1986, temam1977}.

\begin{lemma}\label{properties_of_c}
    The following properties hold for the operator $c$:
	\begin{enumerate}[(a)]
        \item $|c(\bu,\bv,\bw)|\leq \|\bu\|_{L^4(\O)}\|\nabla\bv\|_{L^2(\O)}\|\bw\|_{L^4(\O)}$ $\forall \bu,\bw\in \bL^4(\O)$ and $\bv\in\bH^1(\O)$.
	\item For $\bu,\bv,\bw\in\bH_0^1(\O)$ exists $\matC>0$ such that $$|c(\bu,\bv,\bw)|\leq \matC|\bu|_{H^1(\O)}|\bv|_{H^1(\O)}|\bw|_{H^1(\O)}.$$
	\item $c(\bu,\bv,\bv)=0$, $\forall \bu\in\bH^1(\O)$ with $\div\bu=0$ and $\forall\bv\in \bH^1_0(\O)$
		\item $c(\bu,\bv,\bw)=-c(\bu,\bw,\bv)$, $\forall \bu\in\bH^1(\O)$ with $\div\bu=0$ and $\forall\bv,\bw\in\bH^1_0(\O)$.
	\end{enumerate}
\end{lemma}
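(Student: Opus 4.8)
The plan is to prove the four items in the order stated, since each one after the first follows quickly from its predecessors. For (a) I would expand $c(\bu,\bv,\bw)=\sum_{i,j}\int_\O u_i\,(\partial_i v_j)\,w_j$, bound the integrand pointwise by $|\bu|\,|\nabla\bv|\,|\bw|$, and apply the generalized H\"older inequality with the admissible exponent triple $(4,2,4)$; this yields $|c(\bu,\bv,\bw)|\le\|\bu\|_{L^4(\O)}\|\nabla\bv\|_{L^2(\O)}\|\bw\|_{L^4(\O)}$ and simultaneously shows that the integral defining $c$ is absolutely convergent under the stated hypotheses. For (b) I would combine (a) with two standard facts that hold because $\O\subset\matR^2$ is bounded and Lipschitz: the Sobolev embedding $\bH^1(\O)\hookrightarrow\bL^4(\O)$, and the Poincar\'e inequality $\|\bz\|_{H^1(\O)}\le C|\bz|_{H^1(\O)}$ valid for $\bz\in\bH^1_0(\O)$. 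Substituting these into the estimate of (a) for the $\bL^4$ norms of $\bu$ and $\bw$, and using $\|\nabla\bv\|_{L^2(\O)}=|\bv|_{H^1(\O)}$, gives the bound with $\matC$ depending only on $\O$.

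For (c) I would first take $\bv$ smooth with compact support in $\O$ and $\bu\in\bH^1(\O)$ with $\div\bu=0$. Since $\sum_j(\partial_i v_j)v_j=\tfrac12\,\partial_i|\bv|^2$, one has $c(\bu,\bv,\bv)=\tfrac12\int_\O\bu\cdot\nabla|\bv|^2$; as $|\bv|^2$ has compact support the integration by parts carries no boundary term, and $\tfrac12\int_\O\bu\cdot\nabla|\bv|^2=-\tfrac12\int_\O(\div\bu)\,|\bv|^2=0$. To reach a general $\bv\in\bH^1_0(\O)$ I would pick $\bv_k\to\bv$ in $\bH^1_0(\O)$ with each $\bv_k$ smooth of compact support and pass to the limit in $c(\bu,\bv_k,\bv_k)=0$. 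The point requiring care is that, since $\bu$ need only belong to $\bH^1(\O)$ and not to $\bH^1_0(\O)$, the continuity along the sequence must be taken from (a) rather than (b): writing $c(\bu,\bv_k,\bv_k)-c(\bu,\bv,\bv)=c(\bu,\bv_k-\bv,\bv_k)+c(\bu,\bv,\bv_k-\bv)$ and applying (a), both terms vanish in the limit because $\bu\in\bH^1(\O)\hookrightarrow\bL^4(\O)$, $\|\nabla(\bv_k-\bv)\|_{L^2(\O)}\to0$, and $\|\bv_k\|_{L^4(\O)}$ stays bounded.

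Finally, (d) follows from (c) by polarization: applying (c) to $\bv+\bw\in\bH^1_0(\O)$ and expanding by linearity of $c$ in its last two arguments gives $0=c(\bu,\bv,\bv)+c(\bu,\bv,\bw)+c(\bu,\bw,\bv)+c(\bu,\bw,\bw)$, and since the first and last terms vanish by (c) we obtain $c(\bu,\bv,\bw)=-c(\bu,\bw,\bv)$. I do not expect any genuine difficulty here, as the computations are classical; the only subtle point is the density argument in (c), namely that the limit passage must rely on the coarser estimate (a) because $\bu$ is not assumed to vanish on $\Gamma$, together with the observation that the boundary term in the integration by parts disappears as soon as the test field is smooth with compact support.
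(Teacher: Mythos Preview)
Your proposal is correct and follows the classical route; it is essentially the argument found in Temam's \emph{Navier--Stokes Equations}, which is precisely what the paper invokes. The paper does not give a self-contained proof but simply cites \cite[Lemma~1.1 and Lemma~1.3, Chapter~II]{temam1977} for (a), (c), (d), and deduces (b) from the $H^1$-norm bound in Temam together with Poincar\'e's inequality---your version spells out the same steps (H\"older with exponents $(4,2,4)$, Sobolev embedding, integration by parts plus density, and polarization) explicitly.
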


\begin{proof}
     Item (a) is shown in \cite[lemma 1.1, Chapter II]{temam1977} where it is also shown that, for $\bu,\bv,\bw\in \bH^1_0(\O)$, there exists $C>0$ such that $|c(\bu,\bv,\bw)|\leq C\|\bu\|_{H^1(\O)}\|\bv\|_{H^1(\O)}\|\bw\|_{H^1(\O)}$. Consequently, by applying Poincaré's inequality, we derive (b). Items (c) and (d) are proven in \cite[lemma 1.3, Chapter II]{temam1977}.
\end{proof}

\begin{lemma}\label{extension_g}
Given a function $\bv_D\in \bH^{1/2}(\Gamma)$ satisfying the condition \eqref{cond-compatibilidad} (i.e., $\int_{\Gamma} \bv_D \cdot \bn =0$), there exists, for any $\eps>0$, a function $\Psi=\Psi(\eps)\in \bH^1(\O)$ such that
    $$\div\Psi=0, \qquad \Psi|_{\Gamma}=\bv_D$$
    $$|c(\bw,\Psi,\bw)|\leq \eps |\bw|_{H^1(\O)}^2 \qquad \forall \bw\in \bH_0^1(\O), \, \div\bw=0.$$ 
\end{lemma}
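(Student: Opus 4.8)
The plan is to reduce the problem to a well-known lifting lemma of Hopf type, in which one constructs a divergence-free extension that is concentrated in a thin boundary layer, so that its $\bL^\infty$-norm on that layer (the only place where it fails to be small) can be compensated by a Hardy inequality against the gradient of $\bw$. First I would recall that, since $\bv_0\in\bH^{1/2}(\Gamma)$ satisfies the flux condition \eqref{cond-compatibilidad}, there exists at least one $\bV\in\bH^1(\O)$ with $\div\bV=0$ and $\bV|_\Gamma=\bv_0$; this is the standard solvability of the divergence equation / trace lifting on a Lipschitz domain. The difficulty is that for a \emph{fixed} such $\bV$ the trilinear form $c(\bw,\bV,\bw)$ need not be small relative to $|\bw|_{H^1(\O)}^2$, because by Lemma~\ref{properties_of_c}(a) it is only bounded by $\|\bV\|_{W^{1,?}}$ times $\|\bw\|_{L^4}^2$, and the constant in the Sobolev embedding $\bH^1_0\hookrightarrow\bL^4$ is not small. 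So the point of the lemma is a refined construction of $\Psi=\Psi(\eps)$ depending on $\eps$.

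The key steps, in order, are as follows. (1) Starting from $\bV$ as above, write $\bV=\textup{curl}\,\xi$ for a scalar stream function $\xi\in H^2(\O)$ (in two dimensions the divergence-free condition makes this possible; one adjusts $\xi$ by a constant so that it vanishes on $\Gamma$, using again \eqref{cond-compatibilidad} to make $\xi|_\Gamma$ constant). (2) Introduce a cutoff $\theta_\delta$ that equals $1$ near $\Gamma$ and $0$ away from a $\delta$-neighborhood of $\Gamma$, with the \emph{logarithmic} scaling $|\nabla\theta_\delta(x)|\leq C/(\,d(x,\Gamma)\,|\log\delta|\,)$, and set $\Psi=\textup{curl}(\theta_\delta\,\xi)$. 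Then automatically $\div\Psi=0$ and $\Psi|_\Gamma=\bV|_\Gamma=\bv_0$, so only the smallness estimate remains, and $\delta=\delta(\eps)$ will be chosen at the end. (3) For $\bw\in\bH^1_0(\O)$ with $\div\bw=0$, use Lemma~\ref{properties_of_c}(d) to rewrite $c(\bw,\Psi,\bw)=-c(\bw,\bw,\Psi)=-\int_\O(\bw\cdot\nabla)\bw\cdot\Psi$, and bound this by $\|\bw\|_{L^2(S_\delta)}\,|\bw|_{H^1(\O)}\,\|\Psi\|_{L^\infty(\O)}$ where $S_\delta$ is the support of $\nabla\theta_\delta$; the term $\xi\,\nabla\theta_\delta$ in $\Psi$ is the dangerous one, and on $S_\delta$ one has $|\xi(x)|\le C\,d(x,\Gamma)\,\|\nabla\xi\|_{L^\infty}$ since $\xi|_\Gamma=$ const, whence $|\Psi|\le C/|\log\delta|$ pointwise there plus a benign $O(1)$ term supported in $S_\delta$. (4) Invoke the Hardy inequality $\int_\O |\bw(x)|^2/d(x,\Gamma)^2\,dx\le C|\bw|_{H^1(\O)}^2$ for $\bw\in\bH^1_0(\O)$ to absorb the weight, concluding $|c(\bw,\Psi,\bw)|\le C\big(|\log\delta|^{-1}+\mathrm{meas}(S_\delta)^{1/2}\big)|\bw|_{H^1(\O)}^2$; since both factors tend to $0$ as $\delta\to0$, choosing $\delta$ small enough gives the constant $\eps$.

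The main obstacle is step (3)–(4): controlling the singular factor $\xi\,\nabla\theta_\delta$. A naive cutoff with $|\nabla\theta_\delta|\sim 1/\delta$ is useless because $\|\Psi\|_{L^\infty}$ would be $O(1)$ and the measure of $S_\delta$ only $O(\delta)$, giving via the $\bL^4$-route a bound $O(\delta^{1/2})$ — which, incidentally, would already suffice; but the cleaner and classical route is the logarithmic cutoff combined with the Hardy inequality, which is robust and makes the smallness manifest without tracking Sobolev constants. One must be a little careful that the Hardy inequality and the stream-function representation are valid on a general Lipschitz (not smooth) $\O$; both are, the former being a standard consequence of the Lipschitz character of $\Gamma$. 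I would cite \cite{girault1986, temam1977} for the Hopf-type construction and simply adapt it, since the statement here is exactly the two-dimensional form used in the existence theory for Navier--Stokes with inhomogeneous boundary data.
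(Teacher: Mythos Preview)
The paper does not actually prove this lemma; its entire proof reads ``See \cite[Lemma 2.3, Chapter IV]{girault1986}.'' Your outline is precisely the Hopf--Leray cut-off construction that appears in that reference (and in Temam), so in substance you and the paper agree --- the only difference is that you sketch the argument while the paper defers to the literature.

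Two small technical caveats on your sketch. First, the antisymmetry $c(\bw,\Psi,\bw)=-c(\bw,\bw,\Psi)$ is valid, but it does not follow from Lemma~\ref{properties_of_c}(d) as stated there, since that item asks both the second and third arguments to lie in $\bH^1_0(\O)$ and $\Psi$ does not; the identity still holds because $\bw\in\bH^1_0(\O)$ and $\div\bw=0$ already kill the boundary term in the integration by parts. Second, your step (3) uses $|\xi(x)|\le C\,d(x,\Gamma)\,\|\nabla\xi\|_{L^\infty}$, which presupposes $\nabla\xi\in L^\infty$; for a generic $\bv_0\in\bH^{1/2}(\Gamma)$ one only has $\xi\in H^2(\O)$, and in two dimensions this does not embed into $W^{1,\infty}$. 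The classical proofs in \cite{girault1986,temam1977} handle this either by a density argument (establish the estimate first for smooth $\bv_0$ with constants independent of the extra regularity, then pass to the limit in $\bH^{1/2}(\Gamma)$) or by replacing the pointwise bound with an integral Hardy-type estimate on $\xi/d(\cdot,\Gamma)$. With that adjustment your plan is correct.
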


\begin{proof}
    See \cite[Lemma 2.3, Chapter 4]{girault1986}.
\end{proof}

\begin{theorem}\label{teo-existyunic-NS}
    Let $\bf\in\bH^{-1}(\O)$ and $\bv_D\in\bH^{1/2}(\Gamma)$ satisfy \eqref{cond-compatibilidad}. Given $\eps>0$, if $$\frac{\matC\|\hat{\bf}\|_{H^{-1}(\O)}}{(\nu-\eps)^2}<1,$$
   with $\hat{\bf}=\bf+\nu\Delta\Psi-(\Psi\cdot\nabla)\Psi$, where $\Psi$ is given by the lemma \ref{extension_g}. Then, the problem \eqref{ec_prueba_exist} has unique solution $(\bv,q)\in \bH^1(\O)\times L^2_0(\O)$.   
\end{theorem}

\begin{proof}
  The proof follows the ideas given in  \cite[Theorem 2.3,  Chapter IV]{girault1986} and \cite[Theorems 1.5 and 1.6,  Chapter II]{temam1977}.  
  Let $\Psi\in \bH^1(\O)$ be the extension function of $\bv_D$, with $ \div \Psi=0$,  given by the previous lemma. We set $\bv=\hat{\bv}+\Psi$, with $\hat{\bv}\in\bH^1_0(\O)$. So,
  \begin{align*}
      a(\bv,\bw)+c(\bv,\bv,\bw)&=a(\hat{\bv},\bw)+a(\Psi,\bw)+c(\hat{\bv},\hat{\bv},\bw)\\
      &+c(\Psi,\hat{\bv},\bw)+c(\hat{\bv},\Psi,\bw)+c(\Psi,\Psi,\bw)
  \end{align*} 
 Since the bilinear form $b(\cdot,\cdot)$ satisfies the inf–sup condition (see \cite[Theorem 1.4, Chapter IV]{girault1986})  problem \eqref{ec_prueba_exist} is equivalent to find  $(\hat{\bv},q)\in \bW\times L^2_0(\O)$, with $\bW=\{\bw\in\bH^1_0: \div\bw =0\}$,  such that
  \begin{equation}\label{ec_prueba_exist2}
      \Tilde{a}(\hat{\bv},\hat{\bv},\bw)=\int_\O\hat{\bf}\cdot\bw \quad \forall \bw\in\bW,
  \end{equation}
  where $$\Tilde{a}(\bu,\bv,\bw)=a(\bv,\bw)+c(\bu,\bv,\bw)+c(\Psi,\bv,\bw)+c(\bv,\Psi,\bw).$$ and
  $$\hat{\bf}=\bf + \nu\Delta\Psi-(\Psi\cdot\nabla)\Psi$$
  
  We can observe that $\hat{\bf}\in\bH^{-1}(\O)$.
  
  The proof of existence follows as in Theorem IV.2.1 of \cite{girault1986}, noting that for all $\Tilde{\bw},\bw \in \bW$ $$\Tilde{a}(\Tilde{\bw},\bw,\bw)=\nu|\bw|_{H^1(\O)}^2+c(\bw,\Psi,\bw).$$
  
  where we use item (c) of lemma \ref{properties_of_c}, for all $\bw, \Tilde{\bw} \in\bW$. By Lemma \ref{extension_g}, if we take $\eps<\nu$, we have

  $$\Tilde{a}(\Tilde{\bw},\bw,\bw)\geq (\nu-\eps)|\bw|^2_{H^1(\O)}, \quad \forall \Tilde{\bw},\bw\in\bW.$$ 
     
  Hence, the form $\Tilde{a}(\cdot,\cdot,\cdot)$ is uniformily elliptic and so the problem \eqref{ec_prueba_exist2} has at least one solution $(\hat{\bv},p)\in \bW\times L^2_0(\O)$ \cite{girault1986}.

  Now we prove the uniqueness. If $\bv_1$ is a solution of \eqref{ec_prueba_exist}, then $\hat{\bv}_1=\bv_1-\Psi$ is a solution of \eqref{ec_prueba_exist2}. Taking $\bw=\hat{\bv}_1$, we have $$\nu|\hat{\bv}_1|^2_{H^1(\O)}=-c(\hat{\bv}_1,\Psi,\hat{\bv}_1)+\int_\O \hat{\bf}\cdot\hat{\bv}_1\leq \eps|\hat{\bv}_1|^2_{H^1(\O)}+\|\hat{\bf}\|_{H^{-1}(\O)}|\hat{\bv}_1|_{H^1(\O)},$$
  then we obtain the following a priori estimate
  \begin{equation}\label{estimation_nonhomog}
      |\hat{\bv}_1|_{H^1(\O)}\leq \frac{\|\hat{\bf}\|_{H^{-1}(\O)}}{\nu-\eps}
  \end{equation}
  Now, suppose that $\bv_1$ and $\bv_2$ are two solutions of \eqref{ec_prueba_exist}. Then, $\hat{\bv}_1=\bv_1-\Psi$ and $\hat{\bv}_2=\bv_2-\Psi$ satisfy \eqref{ec_prueba_exist2} and so for all $\bw\in\bW$
  \begin{align*}
      a(\hat{\bv}_1,\bw)+c(\hat{\bv}_1,\hat{\bv}_1,\bw)+c(\hat{\bv}_1,\Psi,\bw)+c(\Psi,\hat{\bv}_1,\bw) &= \int_\O\hat{\bf}\cdot\bw\\
      a(\hat{\bv}_2,\bw)+c(\hat{\bv}_2,\hat{\bv}_2,\bw)+c(\hat{\bv}_2,\Psi,\bw)+c(\Psi,\hat{\bv}_2,\bw) &= \int_\O\hat{\bf}\cdot\bw
  \end{align*}

  subtracting both equations, taking $\bw=\hat{\bv}_1-\hat{\bv}_2$ and using Lemma \ref{properties_of_c} item  d), we obtain
  $$\nu|\hat{\bv}_1-\hat{\bv}_2|^2_1+c(\hat{\bv}_1,\hat{\bv}_1,\hat{\bv}_1-\hat{\bv}_2)-c(\hat{\bv}_2,\hat{\bv}_2,\hat{\bv}_1-\hat{\bv}_2)+c(\hat{\bv}_1-\hat{\bv}_2,\Psi,\hat{\bv}_1-\hat{\bv}_2)=0.$$

   Now, adding and subtracting $c(\hat{\bv}_1,\hat{\bv}_2,\hat{\bv}_1-\hat{\bv}_2)$ and using again Lemma \ref{properties_of_c} we get

   $$\nu|\hat{\bv}_1-\hat{\bv}_2|^2_{H^1(\O)}=-c(\hat{\bv}_1-\hat{\bv}_2,\hat{\bv}_2,\hat{\bv}_1-\hat{\bv}_2)-c(\hat{\bv}_1-\hat{\bv}_2,\Psi,\hat{\bv}_1-\hat{\bv}_2),$$ 

   $$\nu|\hat{\bv}_1-\hat{\bv}_2|^2_{H^1(\O)}\leq \matC|\hat{\bv}_1-\hat{\bv}_2|^2_{H^1(\O)}|\hat{\bv}_2|_{H^1(\O)}+\eps|\hat{\bv}_1-\hat{\bv}_2|^2_{H^1(\O)}.$$

Finally,  using \eqref{estimation_nonhomog} we obtain,

    $$\left(\nu-\frac{\matC\|\hat{\bf}\|_{H^{-1}(\O)}}{\nu-\eps}-\eps\right)|\hat{\bv}_1-\hat{\bv}_2|_{H^1(\O)}\leq 0,$$    
and since $\frac{\matC\|\hat{\bf}\|_{H^{-1}(\O)}}{(\nu-\eps)^2}<1$ we conclude that $\hat{\bv}_1-\hat{\bv}_2=0.$
\end{proof}

\subsection{Existence of \textit{very weak solution}}

In this subsection we analyze the existence of a solution to the problem \eqref{ec-principales} through the concept of \textit{very weak solution}. 
For $\bg\in \bL^2(\Gamma)$ and $\bf\in\bH^{-1}(\O)$, we define $$\bX=\{\phi\in \bW^{2,4/3}(\O)\cap \bW^{1,4/3}_0(\O); \ \div \phi=0\}$$ $$M=\{q\in W^{1,4/3}(\O); \quad \int_{\O}q=0\}.$$ We will say that $\bu\in \bL^4(\O)$ is \textit{very weak solution} of (\ref{ec-principales}) if
     \begin{align*}
     -\nu\int_{\O}\bu\cdot\Delta\phi-\int_{\O}(\bu\cdot\nabla)\phi\cdot\bu&=\int_\O\bf\cdot\phi - \int_{\Gamma}\bg\cdot\frac{\partial \phi}{\partial \bn} \quad \forall \phi\in \bX\\
     \int_{\O}\bu\cdot\nabla q &= \int_{\Gamma} (\bg\cdot\bn) q \quad \forall q\in M
     \end{align*}

\begin{remark}
    For linear problems, like the Poisson problem \cite{apel2016} or the Stokes problem \cite{hamouda2017, duran2020} with Dirichlet condition in $L^2(\Gamma)$, the  corresponding \textit{very weak solution} $\bu$ belongs to the space $\bL^2(\O)$. However, in our problem if $\bu\in \bL^2(\O)$ to ensure that
  
    $$\int_{\O}(\bu\cdot\nabla)\bv\cdot\bu=\sum_{i,j}\int_{\O}u_i\left (\frac{\partial v_j }{\partial x_i}\right)u_j$$ 
    is well-defned we need 
 $\bv\in \bW^ {1,\infty}(\O)$ and so, the $L^2$ setting is unsuitable for formulating our nonlinear problem.
\end{remark}

\begin{remark}\label{veryweak-->weak}
We note that, if we consider the solution of the variational problem with a regular Dirichlet data (for example, in $\bH^{1/2}(\Gamma)$), in which case $(\bu,p)\in \bH^1(\O)\times L^2_0(\O)$, then $\bu$ is also a \textit{very weak solution}. Indeed, from 1.4.4.5 in \cite{Grisvard1985}, we have that $W^{2,4/3}(\O)\subset W^{1,4}(\O)$, and since $L^4(\O)\subset L^2(\O)$, we can conclude that if $\phi \in \bX$ then $\phi\in \bH^1(\O)$. On the other hand, since $W^{1,4/3}(\O)\subset L^4(\O)$ (see 1.4.4.5 in \cite{Grisvard1985} again), we can also deduce that if $q\in M$, then $q\in L^2_0(\O)$.

\end{remark}

For our subsequent analysis, it is important to note that $\bH^1(\O)\hookrightarrow \bL^4(\O)$ \cite{Adams1975, Grisvard1985}. Furthermore, given $\bv\in\bH^1(\O)$ there exists $C_I>0$ such that 
\begin{equation}\label{inmersion}
    \|\bv\|_{L^4(\O)}\leq C_I \|\bv\|_{H^1(\O)}  
\end{equation}

Assuming that there exists a \textit{very weak solution}, the existence of $p$ can be proven using the following lemma.

\begin{lemma}\label{existence-p}
  Let $m$ a nonnegative integer and $s$ a real number such that $1<s<\infty$. Let $\matF\in \bW^{-m,s}(\O)$ such that: $$\langle \matF,\Phi \rangle = 0, \quad \forall \Phi\in \Lambda =\{ \xi\in (C^\infty_c(\O))^2: \div\xi =0\}$$ where $\langle\matF,\Phi\rangle=\int_\O\matF\cdot\Phi$. Then there exists $p\in W^{-m+1,s}(\O)$ such that $\matF = \nabla p$. 
\end{lemma}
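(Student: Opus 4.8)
This is a version of de Rham's theorem, and my plan would be to reconstruct it from the solvability of the divergence equation together with the Hahn--Banach theorem (alternatively it can simply be quoted; see the end). Throughout I write $s'$ for the conjugate exponent, $\frac{1}{s}+\frac{1}{s'}=1$, so that by definition $\bW^{-m,s}(\O)$ is the dual of $\bW^{m,s'}_0(\O)$, with the convention $W^{0,s'}_0(\O)=L^{s'}(\O)$; I also set $L^{s'}_0(\O)=\{g\in L^{s'}(\O):\int_\O g=0\}$. I would carry out the argument for $m\ge 1$, which is the case needed later in the paper; the case $m=0$ is the classical $L^s$-version of de Rham's theorem.

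First I would upgrade the hypothesis: since $\Lambda$ is dense, for the $\bW^{m,s'}(\O)$-norm, in the closed subspace $\{\phi\in\bW^{m,s'}_0(\O):\div\phi=0\}$ (a standard density fact on Lipschitz domains), the assumption $\langle\matF,\Phi\rangle=0$ on $\Lambda$ propagates by continuity of $\matF$ to $\langle\matF,\phi\rangle=0$ for every divergence-free $\phi\in\bW^{m,s'}_0(\O)$. Next I would build the pressure as a functional acting on divergences. The essential ingredient here is the Bogovskii estimate: on a bounded Lipschitz domain the divergence maps $\bW^{m,s'}_0(\O)$ onto $W^{m-1,s'}_0(\O)\cap L^{s'}_0(\O)$ and admits a bounded linear right inverse $B$, with $\div(B g)=g$ and $\|B g\|_{\bW^{m,s'}(\O)}\le C\|g\|_{W^{m-1,s'}(\O)}$. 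For $g$ in this range I would set $\ell(g):=-\langle\matF,\phi\rangle$ for any $\phi\in\bW^{m,s'}_0(\O)$ with $\div\phi=g$; this is unambiguous because two such $\phi$ differ by a divergence-free element, on which $\matF$ vanishes by the previous step, and $\ell$ is bounded since, choosing $\phi=B g$, $|\ell(g)|\le\|\matF\|_{\bW^{-m,s}(\O)}\|B g\|_{\bW^{m,s'}(\O)}\le C\|\matF\|_{\bW^{-m,s}(\O)}\|g\|_{W^{m-1,s'}(\O)}$.

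Then I would extend $\ell$, by Hahn--Banach, to a bounded linear functional on all of $W^{m-1,s'}_0(\O)$, that is, to an element $p\in W^{-m+1,s}(\O)$, still denoted $p$. Finally, for an arbitrary $\phi\in(C^\infty_c(\O))^2\subset\bW^{m,s'}_0(\O)$ one has $\div\phi\in W^{m-1,s'}_0(\O)\cap L^{s'}_0(\O)$, hence $\langle\matF,\phi\rangle=-\ell(\div\phi)=-\langle p,\div\phi\rangle=\langle\nabla p,\phi\rangle$, the last equality being the definition of the distributional gradient; since $(C^\infty_c(\O))^2$ is dense in $\bW^{m,s'}_0(\O)$ and $\matF,\nabla p\in\bW^{-m,s}(\O)$, this yields $\matF=\nabla p$.

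The step I expect to be the real obstacle is the Bogovskii-type estimate invoked above, namely the solvability of $\div\phi=g$ in $\bW^{m,s'}_0(\O)$ with the stated norm bound on a general Lipschitz domain; everything else is soft functional analysis. To avoid reproving it, one may instead cite the lemma directly: for $m=1$, $s=2$ it is, e.g., Corollary~I.2.1 in Girault--Raviart \cite{girault1986}, while in the generality stated here it appears in Ne\v cas' monograph on elliptic equations, in the work of Amrouche--Girault, and in Simon's article on the existence of the pressure for the Navier--Stokes equations.
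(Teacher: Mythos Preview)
Your argument is correct. The paper does not actually prove this lemma: it simply cites \cite[Theorem 2.8]{amrouche-girault}, which is one of the references you yourself list at the end as an alternative to reproving the result. So your proposal strictly subsumes the paper's treatment. What you add is a self-contained route via the Bogovski\u{\i} right inverse of the divergence and Hahn--Banach, which makes transparent why the regularity index drops by exactly one (you dualize $\div:\bW^{m,s'}_0(\O)\to W^{m-1,s'}_0(\O)\cap L^{s'}_0(\O)$). You correctly flag the Bogovski\u{\i} estimate as the only nontrivial analytic input; I would add that the density of $\Lambda$ in the divergence-free part of $\bW^{m,s'}_0(\O)$, which you invoke as ``standard,'' is of comparable depth and is in fact usually proved using the same Bogovski\u{\i} construction (it is established in the very Amrouche--Girault paper you and the authors cite). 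Either way, the logical structure of your proof is sound, and for the application made in the paper ($m=2$, $s=4$, Lipschitz $\O$) all the ingredients you need are available in the references you name.
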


\begin{proof}
    See \cite[Theorem 2.8]{amrouche-girault}.
\end{proof}

Now, taking $$\matF=\bf+\Delta\bu-(\bu\cdot\nabla)\bu,$$
we observe that $\matF \in \bW^{-2,4}(\O)$
and $\langle \matF,\phi \rangle=0 \quad \forall \phi\in \bX$. Therefore, from the previous lemma, we can conclude that there exists $p\in W^{-1,4}(\O)$ such that $\matF=\nabla p$.

Within our analytical framework, certain regularity assumptions on the solution of the classical Stokes problem are required.

\begin{assumption} \label{as:as1}
Assume that the Lipschitz domain $\O$ is such that:
given $\bF\in \bL^{4/3}(\Omega)$ the Stokes problem
    \begin{equation*}       
  \label{Stokes}
	 \left\{\begin{array}{rcc}
		-\nu\Delta\bw+\nabla \pi  &= \bF & \textup{in } \, \O \\
		\div\bw  &=0 & \textup{in } \,  \O \\
            \bw &= 0 & \textup{on } \, \Gamma
	\end{array}\right.
\end{equation*}
has a unique solution $(\bw,\pi)\in \bW^{2,4/3}(\O) \times W^{1,4/3}(\O)\setminus\matR$ and there exists a constant $C_S$ such that:
$$\| \bw \|_{W^{2,4/3}(\O)} + \| \pi \|_ {W^{1,4/3}(\O) }\leq C_S \| F \|_{L^{4/3}(\Omega)}$$

\end{assumption}

\begin{remark}
As stated in \cite{marusic2000}, if $\O$ is a bounded $C^{1,1}$ domain, the Assumption  \ref{as:as1} is satisfied according to Theorem 3 of \cite{amrouche-girault}. The assumption also holds if $\O$ is a convex polygon  (see  \cite[Theorem 5.4, Reamark 5.6]{girault1986} and \cite{grisvard1979}).
\end{remark}

From now on, we assume that $\Omega$ satisfies \textbf{Assumption 1}.
\medskip

The first important result to consider from Maru\v{s}i\'c-Paloka work \cite{marusic2000} is the existence and uniqueness of the \textit{very weak solution} $\bu$ for the case $\bf=0$.

\begin{theorem}\label{teo-exist-1}
Let $\bg\in \bL^2(\Gamma)$ satisfying \eqref{cond-compatibilidad}, $\bf=0$ and $\bL_\Gamma = \{\bw\in\bL^4(\O), \quad \div\bw=0, \quad \bw\cdot\bn\in L^2(\Gamma)\}$. If $\|\bg\|_{L^2(\Gamma)}$ is small enough, then there exists a unique \textit{very weak solution} $\bu\in\bL_\Gamma$ of \eqref{ec-principales}. Furthermore, there exists a constant $C_1=C_1(\O)>0$ such that $$\|\bu\|_{L^4(\O)}\leq \frac{C_1\|\bg\|_{ L^2(\Gamma)}}{1-C_1(1/\nu)\|\bg\|_{L^2(\Gamma)}}$$
\end{theorem}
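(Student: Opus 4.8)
The plan is to reformulate the \textit{very weak solution} problem (with $\bf=0$) as a fixed-point equation. Given $\bz\in\bL_\Gamma$, I would define $\bu=T(\bz)$ to be the \textit{very weak solution} of the \emph{linear Stokes-type} problem in which the convective term $(\bu\cdot\nabla)\phi\cdot\bu$ is replaced by $(\bz\cdot\nabla)\phi\cdot\bu$ (or, after integrating the transpose, $(\bz\cdot\nabla)\bu\cdot\phi$). Concretely, $\bu$ solves
\begin{align*}
-\nu\int_\O\bu\cdot\Delta\phi-\int_\O(\bz\cdot\nabla)\phi\cdot\bu&=-\int_\Gamma\bg\cdot\frac{\partial\phi}{\partial\bn}\qquad\forall\phi\in\bX,\\
\int_\O\bu\cdot\nabla q&=\int_\Gamma(\bg\cdot\bn)q\qquad\forall q\in M.
\end{align*}
To make sense of $T$ I would invoke Assumption \ref{as:as1}: the adjoint problem, for given $\bF\in\bL^{4/3}(\O)$, is to find $(\phi,\pi)\in\bX\times M$ with $-\nu\Delta\phi-(\bz\cdot\nabla)\phi+\nabla\pi=\bF$; this is a compact perturbation of the Stokes operator since the map $\phi\mapsto(\bz\cdot\nabla)\phi$ is lower-order and $\bz\in\bL^4(\O)$, so for $\|\bz\|_{L^4(\O)}$ small (which we will guarantee a posteriori) it is invertible with the $W^{2,4/3}$-estimate surviving. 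Then $\bu$ is defined by duality: $\int_\O\bu\cdot\bF=-\int_\Gamma\bg\cdot\partial_\bn\phi+\int_\Gamma(\bg\cdot\bn)\pi$, which is a bounded functional of $\bF\in\bL^{4/3}(\O)$ by the trace theorem ($\partial_\bn\phi,\pi|_\Gamma\in L^2(\Gamma)$ controlled by $\|\phi\|_{W^{2,4/3}}+\|\pi\|_{W^{1,4/3}}$), hence $\bu\in\bL^4(\O)$ with $\|\bu\|_{L^4(\O)}\le C_1\|\bg\|_{L^2(\Gamma)}$ uniformly in $\bz$ provided $\|\bz\|_{L^4}$ stays in a fixed small ball. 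One also checks $\div\bu=0$ and $\bu\cdot\bn\in L^2(\Gamma)$ (the latter from the second equation tested against $q$ with prescribed trace), so $\bu\in\bL_\Gamma$.

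Next I would show $T$ maps the closed ball $B_R=\{\bz\in\bL_\Gamma:\|\bz\|_{L^4(\O)}\le R\}$ into itself and is a contraction there, for a suitable $R$ and $\|\bg\|_{L^2(\Gamma)}$ small. The self-map property: from the estimate $\|T(\bz)\|_{L^4}\le C_1\|\bg\|_{L^2(\Gamma)}$ (valid as long as $\|\bz\|_{L^4}\le R$ with $R$ fixed small enough that the adjoint inversion works), choosing $R=2C_1\|\bg\|_{L^2(\Gamma)}$ and $\|\bg\|_{L^2(\Gamma)}$ small makes $B_R$ invariant. For the contraction: if $\bu_i=T(\bz_i)$, then $\bu_1-\bu_2$ satisfies the same linear problem with zero boundary data but right-hand side involving $((\bz_1-\bz_2)\cdot\nabla)\phi\cdot\bu_1$ (after rearranging the difference of the two nonlinear-in-$\bz$ terms), so by the same duality estimate $\|\bu_1-\bu_2\|_{L^4(\O)}\le C\|\bz_1-\bz_2\|_{L^4(\O)}\|\bu_1\|_{L^4(\O)}\le CC_1\|\bg\|_{L^2(\Gamma)}\|\bz_1-\bz_2\|_{L^4(\O)}$, which is a strict contraction once $CC_1\|\bg\|_{L^2(\Gamma)}<1$. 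The Banach fixed-point theorem then yields a unique fixed point $\bu\in B_R$, which is precisely the \textit{very weak solution}, and the bound $\|\bu\|_{L^4(\O)}\le C_1\|\bg\|_{L^2(\Gamma)}$ combined with $\|\bu\|_{L^4}=\|T(\bu)\|_{L^4}\le C_1\|\bg\|_{L^2(\Gamma)}/(1-C_1(1/\nu)\|\bg\|_{L^2(\Gamma)})$ after tracking the $\nu$-dependence carefully (the convective term in the adjoint equation carries a $1/\nu$ once divided through by the viscosity) gives the stated estimate.

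For global uniqueness in all of $\bL_\Gamma$ (not just the small ball), I would take any two \textit{very weak solutions} $\bu_1,\bu_2$, subtract the weak formulations, test the adjoint problem against $\bu_1-\bu_2$ and use an energy-type argument: the difference satisfies a linear problem whose coefficient involves $\bu_1$ (or $\bu_2$), and the a priori bound $\|\bu_i\|_{L^4}\lesssim\|\bg\|_{L^2(\Gamma)}$ — which \emph{every} \textit{very weak solution} must satisfy, obtained by testing the adjoint with $\bF=\bu_i$ itself — forces $\bu_1=\bu_2$ when $\|\bg\|_{L^2(\Gamma)}$ is small. The a priori bound for an arbitrary solution is the delicate point here: one must show that testing the adjoint Stokes problem (with coefficient $\bu_i$) against $\bF=\bu_i$ is legitimate and yields $\|\bu_i\|_{L^4}^2\le C\|\bg\|_{L^2(\Gamma)}\|\bu_i\|_{L^4}+C(1/\nu)\|\bu_i\|_{L^4}^2\|\bg\|_{L^2(\Gamma)}$-type inequality, from which the bound follows for small data.

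The main obstacle I anticipate is the well-posedness of the linearized adjoint problem in $W^{2,4/3}\times W^{1,4/3}$ when the lower-order convective coefficient $\bz$ is only in $\bL^4(\O)$: one needs that $\phi\mapsto(\bz\cdot\nabla)\phi$ is a \emph{compact} (or at least small, relatively bounded) perturbation of $-\nu\Delta$ as an operator from $\bW^{2,4/3}\cap\bW^{1,4/3}_0$ into $\bL^{4/3}$, which requires an interpolation/embedding estimate ($\|(\bz\cdot\nabla)\phi\|_{L^{4/3}}\le\|\bz\|_{L^4}\|\nabla\phi\|_{L^2}$ and then $\|\nabla\phi\|_{L^2}\le C\|\phi\|_{W^{2,4/3}}$ in 2D, or a smallness-of-$\bz$ argument) and then either a Fredholm alternative or a Neumann-series perturbation of the Stokes resolvent from Assumption \ref{as:as1}. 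Getting the constants to depend on $\nu$ in exactly the way the stated estimate requires — isolating the $1/\nu$ factor that multiplies $\|\bg\|_{L^2(\Gamma)}$ in the denominator — will also require care, but this is bookkeeping once the functional-analytic framework is in place. Since the theorem is attributed to Maru\v{s}i\'c-Paloka \cite{marusic2000}, I would expect the authors to cite that work for the bulk of the argument.
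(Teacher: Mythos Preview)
Your anticipation in the final paragraph is exactly right: the paper's proof of this theorem consists entirely of a citation to \cite{marusic2000}, noting only that Assumption~\ref{as:as1} is precisely what is needed to obtain the linear Stokes estimate (their Lemma~2) on which Maru\v{s}i\'c-Paloka's Theorem~4 rests. So there is nothing in the paper itself to compare your argument against beyond that reference.

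Your fixed-point sketch is a faithful reconstruction of the transposition method that underlies such very-weak-solution results, and the key ingredients you isolate---the $W^{2,4/3}$ regularity of the adjoint problem from Assumption~\ref{as:as1}, the H\"older bound $\|(\bz\cdot\nabla)\phi\|_{L^{4/3}}\le\|\bz\|_{L^4}\|\nabla\phi\|_{L^2}$ together with the 2D embedding $W^{2,4/3}\hookrightarrow W^{1,4}\hookrightarrow H^1$ to treat the convective term as a small perturbation, and the trace control of $\partial_\bn\phi$ and $\pi|_\Gamma$ in $L^2(\Gamma)$ to close the duality estimate---are the correct ones. One small caution: your global-uniqueness step, where you want an a~priori $L^4$ bound for \emph{any} very weak solution by testing the adjoint with coefficient $\bu_i$ itself, is circular as written (solvability of that adjoint already requires $\|\bu_i\|_{L^4}$ to be small). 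The clean way around this is to split $\bu_i$ into the very weak Stokes lift of $\bg$ plus a remainder, bound the lift directly by $C_1\|\bg\|_{L^2(\Gamma)}$ via the unperturbed adjoint, and then bootstrap the remainder; this is presumably how \cite{marusic2000} proceeds, and it removes the circularity.
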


\begin{proof}

The result follows from \cite[Theorem 4]{marusic2000}. Indeed, if the Assumption \ref{as:as1} is satisfied, the estimate provided in \cite[Lemma 2]{marusic2000} can be obtained. Consequently, as the author note in their Remark 3, all the results from that work are applicable.
\end{proof}

\section{Error analysis of approximating boundary data with regular functions}\label{sec3: errores de aproximacion}

In this section, we obtain estimates of the error involved in approximating our problem \eqref{ec-principales} with one that uses regular boundary data. To achieve this, we
show some useful decomposition of the solution of \eqref{ec-principales}, into a regular part and a non-regular part.

Let $\bg_\eps \in \bH^{1/2}(\Gamma)$ be a regularization of $\bg$ such that $$\|\bg-\bg_\eps\|_{L^2(\Gamma) }\rightarrow 0 \qquad \eps\rightarrow 0.$$ There are some proposals for $\bg_\eps$ that can be found in \cite{apel2016,duran2020,hamouda2017} which we will analyze in the numerical experiments section.

In \cite{marusic2000} the author present the following decomposition of the \textit{very weak solution} of \eqref{ec-principales}, which will be very useful for our objectives.

\begin{theorem}\label{marusic2}
     Given $\eps>0$, $\bf\in\bH^{-1}(\O)$ and $\bg_\eps \in \bH^{1/2}(\Gamma)$ a regularization of $\bg$, there exists $\bu$ \textit{very weak solution} of \eqref{ec-principales} which can be decomposed as $\bu=\bu_\eps+\bv_\eps$, where $\bu_\eps\in\bL_ {\Gamma}$ is a \textit{very week solution} of 
 \begin{equation}\label{non_regular_prob}
  \left\{\begin{array}{rcl}
  -\nu\Delta\bu_\eps+(\bu_\eps\cdot\nabla)\bu_\eps+\nabla p_\eps =&0 & \quad \textup{in } \,  \O \\
  \div\bu_\eps =&0 & \quad \textup{in } \, \O \\
 \bu_\eps =& \bg-\bg_\eps & \quad \textup{on } \,  \Gamma
  \end{array}\right.
 \end{equation}
 and $\bv_\eps\in\bH^1(\O)$ is a weak solution of
 \begin{equation}\label{prob-semiregular}
  \left\{\begin{array}{rcl}
  -\nu\Delta\bv_\eps+(\bv_\eps\cdot\nabla)\bv_\eps+(\bu_\eps \cdot\nabla)\bv_\eps+(\bv_\eps\cdot\nabla)\bu_\eps+\nabla q_\eps &= \bf & \quad \textup{in } \, \O \\
  \div\bv_\eps &=0 & \quad \textup{in } \,  \O \\
 \bv_\eps &=\bg_\eps & \quad \textup{on } \, \Gamma
  \end{array}\right.
 \end{equation}
\end{theorem}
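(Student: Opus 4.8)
The plan is to construct the regular and non-regular parts $\bv_\eps$ and $\bu_\eps$ separately and then verify that $\bu:=\bu_\eps+\bv_\eps$ satisfies the very weak formulation of \eqref{ec-principales}. Throughout we take $\eps$ small enough that $\|\bg-\bg_\eps\|_{L^2(\Gamma)}$ is as small as needed, and we assume — as the regularizations discussed in Section~\ref{sec5: numerical experiments} do — that $\bg_\eps$, hence also $\bg-\bg_\eps$, inherits the compatibility condition \eqref{cond-compatibilidad}. First I would construct $\bu_\eps$: since $\bg-\bg_\eps\in\bL^2(\Gamma)$ satisfies \eqref{cond-compatibilidad} and the body force in \eqref{non_regular_prob} is zero, Theorem~\ref{teo-exist-1} (applicable because $\O$ satisfies Assumption~\ref{as:as1}) gives, for $\|\bg-\bg_\eps\|_{L^2(\Gamma)}$ small, a unique very weak solution $\bu_\eps\in\bL_\Gamma$ of \eqref{non_regular_prob}, together with the bound $\|\bu_\eps\|_{L^4(\O)}=O(\|\bg-\bg_\eps\|_{L^2(\Gamma)})$; in particular $\|\bu_\eps\|_{L^4(\O)}$ is small, which is what the construction of $\bv_\eps$ needs.

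Next I would construct $\bv_\eps\in\bH^1(\O)$, adapting the argument of Theorem~\ref{teo-existyunic-NS}. The terms $c(\bu_\eps,\bv_\eps,\cdot)$ and $c(\bv_\eps,\bu_\eps,\cdot)$ in the weak formulation of \eqref{prob-semiregular} must be read with care since $\bu_\eps$ lies only in $\bL^4(\O)$: the first is directly meaningful by Lemma~\ref{properties_of_c}(a), and the second is defined through integration by parts as $-c(\bv_\eps,\bw,\bu_\eps)$, which is well defined for $\bw\in\bH^1_0(\O)$. Apply Lemma~\ref{extension_g} to $\bg_\eps\in\bH^{1/2}(\Gamma)$ to get a divergence-free lift $\Psi_\eps\in\bH^1(\O)$ with $\Psi_\eps|_\Gamma=\bg_\eps$ and $|c(\bw,\Psi_\eps,\bw)|\leq\delta|\bw|_{H^1(\O)}^2$ on $\bV=\{\bw\in\bH^1_0(\O):\div\bw=0\}$, with $\delta>0$ at our disposal. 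Writing $\bv_\eps=\hat{\bv}_\eps+\Psi_\eps$ with $\hat{\bv}_\eps\in\bV$ and testing against divergence-free functions (so the pressure drops, to be recovered afterwards), the problem reduces to finding $\hat{\bv}_\eps\in\bV$ with $\mathcal B(\hat{\bv}_\eps,\bw)+c(\hat{\bv}_\eps,\hat{\bv}_\eps,\bw)=\langle\hat{\bf}_\eps,\bw\rangle$ for all $\bw\in\bV$, where $\mathcal B$ collects the viscous term and all the terms linear in $\hat{\bv}_\eps$,
\[
\mathcal B(\bz,\bw)=a(\bz,\bw)+c(\bz,\Psi_\eps,\bw)+c(\Psi_\eps,\bz,\bw)+c(\bu_\eps,\bz,\bw)+c(\bz,\bu_\eps,\bw),
\]
and $\hat{\bf}_\eps\in\bH^{-1}(\O)$ absorbs $\bf$ and the data-only terms $a(\Psi_\eps,\cdot)$, $c(\Psi_\eps,\Psi_\eps,\cdot)$, $c(\bu_\eps,\Psi_\eps,\cdot)$, $c(\Psi_\eps,\bu_\eps,\cdot)$, each of which is a bounded functional on $\bV$ by Lemma~\ref{properties_of_c}(a), the embedding \eqref{inmersion} and $\bu_\eps\in\bL^4(\O)$. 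The crucial point is the coercivity of $\mathcal B$ on $\bV$: taking $\bw=\bz$, the identities $c(\Psi_\eps,\bz,\bz)=0$ and $c(\bu_\eps,\bz,\bz)=0$ (Lemma~\ref{properties_of_c}(c) and its extension to $\bu_\eps\in\bL_\Gamma$, which follows by integration by parts and density, using $\div\bu_\eps=0$ and $\bz|_\Gamma=0$), the bound $|c(\bz,\Psi_\eps,\bz)|\leq\delta|\bz|_{H^1(\O)}^2$ from Lemma~\ref{extension_g}, and $c(\bz,\bu_\eps,\bz)=-c(\bz,\bz,\bu_\eps)$ bounded by $C\|\bu_\eps\|_{L^4(\O)}|\bz|_{H^1(\O)}^2$ via Lemma~\ref{properties_of_c}(a), \eqref{inmersion} and Poincaré, give $\mathcal B(\bz,\bz)\geq(\nu-\delta-C\|\bu_\eps\|_{L^4(\O)})|\bz|_{H^1(\O)}^2$, which is positive once $\delta$ and $\eps$ are small enough. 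With this uniform ellipticity, existence of $\hat{\bv}_\eps$ follows from the Galerkin method and Brouwer's fixed-point theorem, exactly as in Theorem~\ref{teo-existyunic-NS} and Theorem IV.2.1 of \cite{girault1986}, and the pressure $q_\eps\in L^2_0(\O)$ is recovered via Lemma~\ref{existence-p} (equivalently via the inf--sup condition for the divergence). This produces $\bv_\eps=\hat{\bv}_\eps+\Psi_\eps\in\bH^1(\O)$, a weak solution of \eqref{prob-semiregular}.

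It remains to glue. Set $\bu=\bu_\eps+\bv_\eps\in\bL^4(\O)$ and fix $\phi\in\bX$; note $\phi\in\bH^1_0(\O)$ by Remark~\ref{veryweak-->weak}, so $\phi$ is admissible in the weak form of \eqref{prob-semiregular}. Integrating by parts there, one moves both derivatives off the viscous term and one derivative off each convective term onto $\phi$, which is legitimate because $\phi\in\bW^{2,4/3}(\O)$ (so, in two dimensions, $\Delta\phi\in\bL^{4/3}(\O)$, $\nabla\phi\in\bL^4(\O)$ and $\phi\in\bL^\infty(\O)$) and $\phi|_\Gamma=0$, and because $\div\bv_\eps=0=\div\bu_\eps$ (the latter in the very weak sense) the spurious volume terms vanish, while the boundary term of the viscous part reproduces the contribution of the datum $\bg_\eps$. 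Adding the resulting identity for $\bv_\eps$ to the very weak formulation satisfied by $\bu_\eps$ (with datum $\bg-\bg_\eps$ and zero body force), the convective contributions recombine: expanding $((\bu_\eps+\bv_\eps)\cdot\nabla)\phi\cdot(\bu_\eps+\bv_\eps)$ produces exactly the quadratic term in $\bu_\eps$, the quadratic term in $\bv_\eps$ and the two cross terms, so with $\bu=\bu_\eps+\bv_\eps$ one gets $\int_\O(\bu\cdot\nabla)\phi\cdot\bu$; the boundary data recombine as $(\bg-\bg_\eps)+\bg_\eps=\bg$. This is precisely the momentum line of the very weak formulation of \eqref{ec-principales}. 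The divergence line is direct: for $q\in M$, Green's formula and $\div\bv_\eps=0$ give $\int_\O\bv_\eps\cdot\nabla q=\int_\Gamma(\bg_\eps\cdot\bn)q$, and adding the corresponding identity for $\bu_\eps$ yields $\int_\O\bu\cdot\nabla q=\int_\Gamma(\bg\cdot\bn)q$. Hence $\bu=\bu_\eps+\bv_\eps$ is a very weak solution of \eqref{ec-principales}. The step I expect to be the main obstacle is the construction of $\bv_\eps$ with the extra coupling terms: because $\bu_\eps$ is only in $\bL^4(\O)$ one cannot invoke Lemma~\ref{properties_of_c}(b), so these terms must be interpreted via integration by parts and estimated with Lemma~\ref{properties_of_c}(a), and — more seriously — coercivity of $\mathcal B$ survives only thanks to $\|\bu_\eps\|_{L^4(\O)}=O(\|\bg-\bg_\eps\|_{L^2(\Gamma)})$ being small, so the whole construction is valid only for $\eps$ small; a secondary technical point is the careful justification of the mixed-regularity ($\bL^4$ against $\bW^{2,4/3}$) integrations by parts in the gluing step.
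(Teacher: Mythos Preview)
Your proposal is correct and follows essentially the same route as the paper: construct $\bu_\eps$ via Theorem~\ref{teo-exist-1}, build $\bv_\eps$ by lifting with $\Psi$ from Lemma~\ref{extension_g} and applying the Galerkin/Brouwer argument of Theorem~\ref{teo-existyunic-NS} to the reduced problem in $\bV$, then verify that $\bu=\bu_\eps+\bv_\eps$ satisfies the very weak formulation by splitting into the $\bu_\eps$-part and the $\bv_\eps$-part and invoking Remark~\ref{veryweak-->weak}. You are in fact slightly more explicit than the paper on two points it leaves implicit: that the coupling term $c(\hat\bv_\eps,\bu_\eps,\hat\bv_\eps)$ is controlled only because $\|\bu_\eps\|_{L^4(\O)}$ is small, and that $c(\bu_\eps,\bz,\bz)=0$ requires a density extension of Lemma~\ref{properties_of_c}(c) since $\bu_\eps\notin\bH^1(\O)$.
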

\begin{proof}
Since $\bg-\bg_\eps\in \bL^2(\Gamma)$ satisfies the compatibility condition and we can choose $\eps>0$ as small as we like, then the existence and uniqueness of \eqref{non_regular_prob} are guaranteed by Theorem \ref{teo-exist-1}. We also have the following estimate: 

\begin{equation}\label{estimate_u_eps}
    \|\bu_\eps\|_{L^4(\O)}\leq  \frac{C_1\|\bg-\bg_\eps\|_{L^2(\Gamma)}}{1- C_1(1/\nu)\|\bg-\bg_\eps\|_{L^2(\Gamma)}}    
\end{equation}
    
 For the existence of a solution to the problem \eqref{prob-semiregular} we first observe that  given $\phi\in \bH^1_0(\O)$
 \begin{align}\label{integrals_u_eps}
 \int_\O (\bv_\eps\cdot\nabla)\bu_\eps\cdot\phi := -\int_\O (\bv_\eps\cdot\nabla)\phi\cdot\bu_\eps &\leq \ \|\bv_\eps\|_{L^4(\O)}\|\nabla\phi\|_{L^2(\O)}
 \|\bu_\eps\|_{L^4(\O)} \nonumber\\
 &\leq C_{I}\|\bv_\eps\|_{H^1(\O)}\|\phi\|_{H^1(\O)}\|\bu_\eps\|_{L^4(\O)}
 \end{align}

Besides,
 $$\int_\O(\bu_\eps\cdot\nabla)\bv_\eps\cdot\phi\leq C_I \|\bu_\eps\|_{L^4(\O)}\|\bv_\eps\|_{H^1(\O)}\|\phi\|_{H^1(\O)}$$
 Therefore, the linear terms $\int_\O (\bv_\eps\cdot\nabla)\bu_\eps\cdot\phi$ and $\int_\O(\bu_\eps\cdot\nabla)\bv_\eps\cdot\phi$ are well defined.

Now, the same ideas applied to the proof of existence of a solution to the problem \eqref{prob_regular} are used. Indeed, given $\phi\in \bH^1_0(\O)$ with $\div\phi=0$, it is considered
    $$a(\bv_\eps,\phi)+c(\bv_\eps,\bv_\eps,\phi)+\int_\O (\bu_\eps\cdot\nabla)\bv_\eps\cdot\phi+\int_\O (\bv_\eps\cdot\nabla)\bu_\eps\cdot\phi=\int_\O \bf\cdot\phi.$$
    Using the lemma \ref{extension_g}, let $\hat{\bv}_\eps=\bv_\eps-\Psi$ and considering
    \begin{align*}
        &a(\hat{\bv}_\eps,\phi)+c(\hat{\bv}_\eps,\hat{\bv}_\eps,\phi)+c(\hat{\bv}_\eps,\Psi,\phi)\\
        &+c(\Psi,\hat{\bv}_\eps,\phi)+\int_\O(\bu_\eps\cdot\nabla)\hat{\bv}_\eps\phi+\int_\O(\hat{\bv}_\eps\cdot\nabla)\bu_\eps\cdot\phi\\
        &=\int_\O \hat{\bf}\cdot\phi-\int_\O(\bu_\eps\cdot\nabla)\Psi\cdot\phi-\int_\O(\Psi\cdot\nabla)\bu_\eps\cdot\phi.
    \end{align*}
    where $\hat{\bf}=\bf+\nu\Delta\Psi-(\Psi\cdot\nabla)\Psi$ and the right-hand side of the equality is well-defined due to observation \eqref{integrals_u_eps}. 
    Then, we can apply arguments analogous to those provided in the proof of Theorem  \ref{teo-existyunic-NS}. 
    
    Finally, let us see that $\bu=\bu_\eps+\bv_\eps$ is a \textit{very weak solution} of the problem \eqref{ec-principales}. It is clear that $\bu \in \bL^4(\O)$, so we need to check that:
    \begin{align*}
        -\nu\int_{\O}(\bu_\eps+\bv_\eps)\cdot\Delta\phi -\int_{\O}((\bu_\eps+\bv_\eps)\cdot\nabla)\phi\cdot(\bu_\eps+\bv_\eps) &=\int_\O\bf\cdot\phi \\
        &-\int_{\Gamma}\bg\cdot\frac{\partial \phi}{\partial \bn} \quad \forall \phi\in \bX\\
        \int_{\O}(\bu_\eps+\bv_\eps)\cdot\nabla q &=\int_{\Gamma} (\bg\cdot\bn) q \quad \forall q\in M.
    \end{align*}  
        
    From the first equation, if we subtract and add $\bg_\eps$ in the integral that corresponds to the edge, and using observation \eqref{integrals_u_eps}, we can write
    \begin{align*}
       &\underbrace{-\nu\int_{\O} \bu_\eps\cdot\Delta\phi -\int_{\O}(\bu_\eps\cdot\nabla)\phi\cdot\bu_\eps +\int_{\Gamma}(\bg-\bg_\eps)\cdot\frac{\partial \phi}{\partial \bn}}_{(I)}\\
        &\underbrace{-\nu\int_{\O}\bv_\eps\cdot\Delta\phi +\int_{\O}(\bv_\eps\cdot\nabla)\bv_\eps\cdot\phi +\int_{\O}(\bu_\eps\cdot\nabla)\bv_\eps\cdot\phi +}_{(II)}\\
        &\underbrace{+\int_{\O}(\bv_\eps\cdot\nabla)\bu_\eps\cdot\phi+\int_{\Gamma}\bg_\eps\cdot\frac{\partial \phi}{\partial \bn}}_{(II)}=\int_\O\bf\cdot\phi 
    \end{align*}
    
    Given that $\bu_\eps$ is a \textit{very weak solution} of the Navier-Stokes problem with edge data $\bg-\bg_\eps$, term (I) is equal to zero. On the other hand, term (II) must be equal to $\int_\O \bf\cdot\phi$ since $\bv_\eps$ is a \textit{very weak solution} of problem \eqref{prob-semiregular} (see Remark \ref{veryweak-->weak}). For the second equation the reasoning is analogous.
\end{proof}

Next, we present the first main result, which shows the approximation between $\bv_\eps$ (the regular part of the decomposition of the \textit{very weak solution} $\bu$ of \eqref{ec-principales}) and the solution $\bv$ of \eqref{NV_dato_ge} (i.e, the solution of the classical Navier-Stokes problem with boundary data $\bg_\eps$).

\begin{prop}\label{main_result_1}
     Given $\eps>0$ and let $(\bv,q)\in \bH^1(\O)\times L^2_0(\O)$ be the weak solutions of \eqref{NV_dato_ge}.
    If $$\frac{\matC\|\hat{\bf}\|_{H^{-1}(\O)}}{(\nu-\eps)^2}<1,$$
    
    with $\hat{\bf}=\bf+\nu\Delta\Psi-(\Psi\cdot\nabla)\Psi$, where $\Psi \in \bH^1(\O)$ is the regular extension function of $\bg_{\eps}$ provided by lemma \ref{extension_g}. Then, there exists a positive constant $C$ such that $$|\bv_\eps-\bv|_{H^1(\O)}\leq C\frac{C_1\|\bg-\bg_\eps\|_{L^2(\Gamma)}}{1-C_1(1/\nu)\|\bg-\bg_\eps\|_{L^2(\Gamma)}}.$$ 
\end{prop}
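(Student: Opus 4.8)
The plan is to compare $\bv_\eps$ and $\bv$ directly by subtracting their weak formulations and estimating the difference $\bw := \bv_\eps - \bv$ in the $H^1$-seminorm. Since both $\bv_\eps$ and $\bv$ satisfy the homogeneous Dirichlet condition $\bg_\eps$ on $\Gamma$, the difference $\bw$ lies in $\bH^1_0(\O)$ with $\div\bw = 0$, so it is a legitimate test function and no boundary lifting is needed for the difference itself. First I would write the weak form of \eqref{prob-semiregular} satisfied by $\bv_\eps$ against a divergence-free test function $\phi\in\bV$, namely $a(\bv_\eps,\phi)+c(\bv_\eps,\bv_\eps,\phi)+c(\bu_\eps,\bv_\eps,\phi)+c(\bv_\eps,\bu_\eps,\phi)=\int_\O\bf\cdot\phi$, and the weak form of \eqref{NV_dato_ge} satisfied by $\bv$, namely $a(\bv,\phi)+c(\bv,\bv,\phi)=\int_\O\bf\cdot\phi$. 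Subtracting, the pressure terms and the $\bf$ terms cancel, leaving an identity of the form
\begin{equation*}
a(\bw,\phi)+\big[c(\bv_\eps,\bv_\eps,\phi)-c(\bv,\bv,\phi)\big]+c(\bu_\eps,\bv_\eps,\phi)+c(\bv_\eps,\bu_\eps,\phi)=0.
\end{equation*}
The bracketed trilinear difference is handled by the standard trick of adding and subtracting $c(\bv_\eps,\bv,\phi)$, giving $c(\bw,\bv_\eps,\phi)+c(\bv,\bw,\phi)$.

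Next I would take $\phi=\bw$. By Lemma \ref{properties_of_c}(c), $c(\bv,\bw,\bw)=0$ since $\div\bv=0$ and $\bw\in\bH^1_0(\O)$, and likewise $c(\bu_\eps,\bv_\eps,\bw)$ needs care because $\bu_\eps$ is only in $\bL^4(\O)$; here I would use Lemma \ref{properties_of_c}(a) to bound $|c(\bu_\eps,\bv_\eps,\bw)|\le\|\bu_\eps\|_{L^4(\O)}|\bv_\eps|_{H^1(\O)}\|\bw\|_{L^4(\O)}$ and the embedding \eqref{inmersion} to replace $\|\bw\|_{L^4(\O)}$ by $C_I|\bw|_{H^1(\O)}$. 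The term $c(\bv_\eps,\bu_\eps,\bw)$ is rewritten via integration by parts as in \eqref{integrals_u_eps}, $c(\bv_\eps,\bu_\eps,\bw)=-c(\bv_\eps,\bw,\bu_\eps)$, and bounded the same way by $C_I\|\bu_\eps\|_{L^4(\O)}|\bv_\eps|_{H^1(\O)}|\bw|_{H^1(\O)}$. Finally $|c(\bw,\bv_\eps,\bw)|\le\matC|\bw|_{H^1(\O)}^2|\bv_\eps|_{H^1(\O)}$ by Lemma \ref{properties_of_c}(b) (recalling $\bw,\Psi$-shifted arguments are in $\bH^1_0$). Collecting terms, the coercive part gives $\nu|\bw|_{H^1(\O)}^2$ on the left, and everything else is bounded by a constant times $|\bv_\eps|_{H^1(\O)}\,|\bw|_{H^1(\O)}^2$ plus $2C_I\|\bu_\eps\|_{L^4(\O)}|\bv_\eps|_{H^1(\O)}|\bw|_{H^1(\O)}$, so after dividing by $|\bw|_{H^1(\O)}$ we obtain
\begin{equation*}
\big(\nu - C|\bv_\eps|_{H^1(\O)}\big)\,|\bw|_{H^1(\O)} \le 2C_I\|\bu_\eps\|_{L^4(\O)}\,|\bv_\eps|_{H^1(\O)}.
\end{equation*}

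To finish I need two things: a uniform-in-$\eps$ a priori bound on $|\bv_\eps|_{H^1(\O)}$ (and on $|\bv|_{H^1(\O)}$) so that the coefficient $\nu-C|\bv_\eps|_{H^1(\O)}$ stays bounded below by a positive constant, and then the bound \eqref{estimate_u_eps} on $\|\bu_\eps\|_{L^4(\O)}$, which produces exactly the claimed right-hand side $C\,C_1\|\bg-\bg_\eps\|_{L^2(\Gamma)}/(1-C_1(1/\nu)\|\bg-\bg_\eps\|_{L^2(\Gamma)})$. The a priori bound on $|\bv_\eps|_{H^1(\O)}$ follows from testing \eqref{prob-semiregular} (in its $\Psi$-shifted form) with $\hat\bv_\eps=\bv_\eps-\Psi$ exactly as in the proof of Theorem \ref{teo-existyunic-NS}, using the estimate \eqref{estimation_nonhomog} together with the smallness hypothesis $\matC\|\hat\bf\|_{H^{-1}(\O)}/(\nu-\eps)^2<1$; one must check that the extra linear terms $c(\bu_\eps,\cdot,\cdot)$, $c(\cdot,\bu_\eps,\cdot)$ do not spoil coercivity, which is true provided $\|\bg-\bg_\eps\|_{L^2(\Gamma)}$ (hence $\|\bu_\eps\|_{L^4(\O)}$) is small, something we may assume since $\eps\to0$. \textbf{The main obstacle} I anticipate is precisely bookkeeping the smallness conditions: one needs $\nu - C|\bv_\eps|_{H^1(\O)}$ bounded below uniformly in $\eps$, which requires combining the given hypothesis on $\hat\bf$ with the decay of $\|\bu_\eps\|_{L^4(\O)}$, and verifying that the constant $C$ appearing in the final estimate can indeed be chosen independent of $\eps$ — the low regularity of $\bu_\eps$ forces all its occurrences to be routed through the $\bL^4$–$\bH^1$ duality rather than an $\bH^1$ bound, so one must be careful that no term demands more than $\bu_\eps\in\bL^4(\O)$.
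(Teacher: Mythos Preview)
Your overall strategy is the right one and coincides with the paper up to the point where you test with $\bw=\bv_\eps-\bv$ and isolate the cross term $c(\bw,\bv_\eps,\bw)$. The gap is exactly the one you flag at the end: from the hypothesis $\matC\|\hat{\bf}\|_{H^{-1}(\O)}/(\nu-\eps)^2<1$ and a \emph{uniform bound} on $|\bv_\eps|_{H^1(\O)}$ you cannot deduce that $\nu-C|\bv_\eps|_{H^1(\O)}>0$. The quantity $|\bv_\eps|_{H^1(\O)}$ contains $|\Psi|_{H^1(\O)}$, which depends on $\bg_\eps$ and need not be small; the hypothesis says nothing about $|\Psi|_{H^1(\O)}$. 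A uniform bound only says this coefficient is bounded below, not that it is positive. (Relatedly, Lemma~\ref{properties_of_c}(b) requires all three slots in $\bH^1_0(\O)$, so you cannot invoke it with $\bv_\eps$ in the middle and obtain the specific constant $\matC$; a bound with some constant follows from part~(a), but that is not the issue.)

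The paper closes this gap by never estimating $c(\bw,\cdot,\bw)$ through the $H^1$-seminorm of the full middle argument. It writes $\bv=\hat\bv+\Psi$ and $\bv_\eps=\hat\bv_\eps+\Psi$ with $\hat\bv,\hat\bv_\eps\in\bH^1_0(\O)$, and chooses the add-and-subtract so that after taking $\phi=\bw$ the surviving quadratic-in-$\bw$ terms are $c(\bw,\hat\bv,\bw)+c(\bw,\Psi,\bw)$. The second is controlled by $\eps|\bw|_{H^1(\O)}^2$ via Lemma~\ref{extension_g} (the Hopf--Leray extension, whose whole point is that $c(\cdot,\Psi,\cdot)$ is small even though $|\Psi|_{H^1(\O)}$ may be large), and the first by $\matC|\hat\bv|_{H^1(\O)}|\bw|_{H^1(\O)}^2$ with the clean a~priori bound $|\hat\bv|_{H^1(\O)}\le\|\hat\bf\|_{H^{-1}(\O)}/(\nu-\eps)$ from \eqref{estimation_nonhomog}. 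This produces the coefficient $\nu-\eps-\matC\|\hat\bf\|_{H^{-1}(\O)}/(\nu-\eps)$, whose positivity is exactly the stated hypothesis. Your argument becomes correct once you insert this splitting; in particular, arranging the trilinear difference as $c(\bv_\eps,\bw,\phi)+c(\bw,\bv,\phi)$ (rather than $c(\bw,\bv_\eps,\phi)+c(\bv,\bw,\phi)$) lands you on $c(\bw,\bv,\bw)$ and then directly on the paper's estimate.
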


\begin{proof}
    For $\bg_{\eps} \in \bH^{1/2} (\Gamma)$, by Lemma \ref{extension_g} we know that there exists a function $\Psi\in\bH^1(\O)$ such that $\div \Psi=0$ and $\Psi|_{\Gamma}=\bg_\eps$. Following the ideas of the proof of the Theorem \ref{teo-existyunic-NS}, we can write $\bv_\eps=\hat{\bv}_\eps+\Psi$ and $\bv=\hat{\bv}+\Psi$ with $\hat{\bv}_\eps,\hat{\bv}\in \bH^1_0(\O)$. Given $\phi\in\bH^1_0(\O)$, with $\div\phi=0$, a test function for the variational problem of \eqref{NV_dato_ge} and \eqref{prob-semiregular}, 

    \begin{equation}\label{weak_form_v_eps-hat}
        a(\hat{\bv}_\eps,\phi)+c(\hat{\bv}_\eps,\hat{\bv}_\eps,\phi)+c(\hat{\bv}_\eps,\Psi,\phi)+c(\Psi,\hat{\bv}_\eps,\phi)=\int_\O \hat{\bf}\cdot\phi -\int_\O(\bu_\eps\cdot\nabla)\bv_\eps\cdot\phi-\int_\O(\bv_\eps\cdot\nabla)\bu_\eps\cdot\phi    
    \end{equation}

    $$a(\hat{\bv},\phi)+c(\hat{\bv},\hat{\bv},\phi)+c(\hat{\bv},\Psi,\phi)+c(\Psi,\hat{\bv},\phi)=\int_\O \hat{\bf}\cdot\phi,$$
    where in particular we use that $(\bv,q)$ satisfies
    \begin{equation}\label{forma_debil_regular}
        a(\bv,\bw)+c(\bv,\bv,\bw) =\int_\O \bf\cdot\bw  \quad \forall\bw\in\bV
    \end{equation}

Subtracting both equations and taking $\phi=\hat{\bv}_\eps-\hat{\bv}$ we can obtain
\begin{align*}
  a(\hat{\bv}_\eps-\hat{\bv},\hat{\bv}_\eps-\hat{\bv})&+c(\hat{\bv}_\eps-\hat{\bv},\hat{\bv},\hat{\bv}_\eps-\hat{\bv})+c(\hat{\bv}_\eps-\hat{\bv},\Psi,\hat{\bv}_\eps-\hat{\bv})=\\
  &-\int_\O(\bu_\eps\cdot\nabla)\bv_\eps\cdot(\hat{\bv}_\eps-\hat{\bv})-\int_\O(\bv_\eps\cdot\nabla)\bu_\eps\cdot(\hat{\bv}_\eps-\hat{\bv})  
\end{align*}

Then,
\begin{align*}
\nu|\hat{\bv}_\eps-\hat{\bv}|^2_{H^1(\O)}&=-c(\hat{\bv}_\eps-\hat{\bv},\hat{\bv},\hat{\bv}_\eps-\hat{\bv})-c(\hat{\bv}_\eps-\hat{\bv},\Psi,\hat{\bv}_\eps-\hat{\bv})\\
&-\int_\O(\bu_\eps\cdot\nabla)\bv_\eps\cdot(\hat{\bv}_\eps-\hat{\bv})   -\int_\O(\bv_\eps\cdot\nabla)\bu_\eps\cdot(\hat{\bv}_\eps-\hat{\bv})
\end{align*}

Using the properties of the operator $c(\cdot,\cdot,\cdot)$, the estimate given in lemma \ref{extension_g}, the estimate \eqref{integrals_u_eps}, the a priori estimate \eqref{estimation_nonhomog} for $\hat{\bv}$ and choosing $\eps$ such that $\nu-\eps>0$ we get

\begin{align*}
    \nu|\hat{\bv}_\eps-\hat{\bv}|^2_{H^1(\O)} &\leq \matC|\hat{\bv}_\eps-\hat{\bv}|^2_{H^1(\O)}|\hat{\bv}|_{H^1(\O)}+\eps|\hat{\bv}_\eps-\hat{\bv}|^2_{H^1(\O)}\\
    &+C\|\bu_\eps\|_{L^4(\O)}\|\bv_\eps\|_{H^1(\O)}\|\hat{\bv}_\eps-\hat{\bv}\|_{H^1(\O)}\\
    &\leq|\hat{\bv}_\eps-\hat{\bv}|^2_{H^1(\O)}\frac{\matC\|\hat{\bf}\|_{-1}}{\nu-\eps}+\eps|\hat{\bv}_\eps-\hat{\bv}|^2_{H^1(\O)}\\
    &+C\|\bu_\eps\|_{L^4(\O)}\|\bv_\eps\|_{H^1(\O)}|\hat{\bv}_\eps-\hat{\bv}|_{H^1(\O)}
\end{align*}

Now, by taking $\phi=\hat{\bv}_\eps$ into \eqref{weak_form_v_eps-hat}, applying  property (c) of lemma \ref{properties_of_c}, lemma \ref{extension_g}, the estimation \eqref{integrals_u_eps}, 
 and the fact that $\|\bu_\eps\|_{L^4(\O)}$ is bounded,  we can get 
 $$|\hat{\bv}_\eps|_{H^1(\O)}\leq C( \|\hat{\bf}\|_{H^{-1}(\O)} + \|\Psi\|_{H^1(\O)}).$$
and so, $$\|\bv_\eps\|_{H^1(\O)}=\|\hat{\bv}_\eps+\Psi\|_{H^1(\O)}\leq C(|\hat{\bv}_\eps|_{H^1(\O)}+\|\Psi\|_{H^1(\O)})\leq C(\|\hat{\bf}\|_{H^{-1}(\O)} + \|\Psi\|_{H^1(\O)}).$$

Finally, we can infer
$$\left(\nu-\frac{\matC\|\hat{\bf}\|_{-1}}{\nu-\eps}-\eps\right)|\hat{\bv}_\eps-\hat{\bv}|_{H^1(\O)}\leq C\|\bu_\eps\|_{L^4(\O)}$$
where $\nu-\frac{\matC\|\hat{\bf}\|_{-1}}{\nu-\eps}-\eps>0$ since $\frac{\matC\|\hat{\bf}\|_{-1}}{(\nu-\eps)^2}<1$.

Thus, using \eqref{estimate_u_eps} in the above estimation, we conclude the proof.

\end{proof}

We note that, since $\bv_\eps-\bv\in \bH^1_0(\O)$, we can rewrite the estimate proven in the previous theorem using the Poincaré inequality and obtain

\begin{equation}\label{estimate_v-v_eps}
\|\bv_\eps-\bv\|_{H^1(\O)}\leq C\frac{C_1\|\bg-\bg_\eps\|_{L^2(\Gamma)}}{1-C_1(1/\nu)\|\bg-\bg_\eps\|_{L^2(\Gamma)}},    
\end{equation}

for some positive constant $C$.

Now, we are in a position to present the following main result.

\begin{theorem}\label{main_result_2}
    Given $\eps>0$ and $\bg_\eps\in \bH^{1/2}(\Gamma)$ such that $\|\bg -\bg_\eps\|_{L^2(\Gamma)}<\eps$. Assuming that $\frac{\matC\|\hat{\bf}\|_{-1}}{(\nu-\eps)^2}<1$ and $\|\bg\|_{L^2(\Gamma)}$ is small enough, if $\bu\in \bL_{\Gamma}$ is \textit{very weak solution} of \eqref{ec-principales} and $\bv$ is a weak solution of \eqref{NV_dato_ge} then $$\|\bu- \bv\|_{L^4(\O)}\leq C\frac{C_1\|\bg-\bg_\eps\|_{L^2(\Gamma)}}{1-C_1(1/ \nu)\|\bg-\bg_\eps\|_{L^2(\Gamma)}}$$ with $C$ a positive constant.
\end{theorem}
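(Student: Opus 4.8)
The plan is to build the argument directly on the decomposition of the \textit{very weak solution} supplied by Theorem \ref{marusic2}. Since $\bg-\bg_\eps\in\bL^2(\Gamma)$ satisfies the compatibility condition \eqref{cond-compatibilidad} and $\|\bg-\bg_\eps\|_{L^2(\Gamma)}<\eps$ is as small as needed, Theorem \ref{teo-exist-1} applied with boundary datum $\bg-\bg_\eps$ gives uniqueness of the \textit{very weak solution} in $\bL_\Gamma$; hence the $\bu$ of the statement coincides with the one produced in Theorem \ref{marusic2}, namely $\bu=\bu_\eps+\bv_\eps$, where $\bu_\eps\in\bL_\Gamma$ is the \textit{very weak solution} of \eqref{non_regular_prob} and $\bv_\eps\in\bH^1(\O)$ is the weak solution of \eqref{prob-semiregular}.

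Then I would write
$$\bu-\bv=\bu_\eps+(\bv_\eps-\bv)$$
and apply the triangle inequality in $\bL^4(\O)$,
$$\|\bu-\bv\|_{L^4(\O)}\leq\|\bu_\eps\|_{L^4(\O)}+\|\bv_\eps-\bv\|_{L^4(\O)}.$$
The first term is bounded immediately by the a priori estimate \eqref{estimate_u_eps} coming from Theorem \ref{teo-exist-1}, which is (up to the generic constant) exactly the right-hand side claimed. For the second term, both $\bv_\eps$ and $\bv$ attain the boundary value $\bg_\eps$, so $\bv_\eps-\bv\in\bH^1_0(\O)$; the Sobolev embedding \eqref{inmersion} gives $\|\bv_\eps-\bv\|_{L^4(\O)}\leq C_I\|\bv_\eps-\bv\|_{H^1(\O)}$, and Proposition \ref{main_result_1}, in the form \eqref{estimate_v-v_eps}, bounds $\|\bv_\eps-\bv\|_{H^1(\O)}$ by the same quantity $C\,C_1\|\bg-\bg_\eps\|_{L^2(\Gamma)}/(1-C_1(1/\nu)\|\bg-\bg_\eps\|_{L^2(\Gamma)})$. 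Adding the two bounds and collecting all constants into a single $C$ completes the proof.

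The step needing the most care is not an analytic obstacle but the bookkeeping of hypotheses: one must check that the smallness condition $\matC\|\hat{\bf}\|_{H^{-1}(\O)}/(\nu-\eps)^2<1$ required by Proposition \ref{main_result_1} (equivalently, the condition of Theorem \ref{teo-existyunic-NS} that makes $\bv$ well defined), together with the smallness of $\|\bg-\bg_\eps\|_{L^2(\Gamma)}$ needed for uniqueness of the \textit{very weak solution}, are simultaneously in force under the standing assumptions on $\bf$, $\bg$ and on the choice of $\eps$. Once this is granted, the estimate is just the concatenation of \eqref{estimate_u_eps}, \eqref{inmersion} and \eqref{estimate_v-v_eps}, with no further computation.
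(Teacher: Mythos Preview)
Your proof is correct and follows exactly the same route as the paper's own argument: invoke the decomposition $\bu=\bu_\eps+\bv_\eps$ from Theorem~\ref{marusic2}, apply the triangle inequality in $\bL^4(\O)$, and bound the two pieces by \eqref{estimate_u_eps} and by \eqref{inmersion} combined with \eqref{estimate_v-v_eps}. Your additional remark about verifying that the smallness hypotheses of Proposition~\ref{main_result_1} are in force is a welcome point of care that the paper leaves implicit.
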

\begin{proof}
    Since $\bu$ is a \textit{very weak solution}, by the decomposition given in theorem \ref{marusic2}  we can write $\bu=\bu_\eps + \bv_\eps$. Hence, by \eqref{inmersion} and the estimates \eqref{estimate_u_eps} and \eqref{estimate_v-v_eps} we get
    
    \begin{align*}
        \|\bu-\bv\|_{L^4(\O)}&=\|\bu_\eps+\bv_\eps-\bv\|_{L^4(\O)}\leq\|\bu_\eps\|_{L^4(\O)}+\|\bv_\eps-\bv\|_{L^4(\O)}\\
        &\leq\|\bu_\eps\|_{L^4(\O)}+C_I\|\bv_\eps-\bv\|_{H^1(\O)}\\
        &\leq \frac{C_1\|\bg-\bg_\eps\|_{L^2(\Gamma)}}{1-C_1(1/\nu)\|\bg-\bg_\eps\|_{L^2(\Gamma)}} + C_IC\frac{C_1\|\bg-\bg_\eps\|_{L^2(\Gamma)}}{1-C_1(1/\nu)\|\bg-\bg_\eps\|_{L^2(\Gamma)}}
    \end{align*}
\end{proof}

\section{Finite element approximation and a priori estimates}\label{sec4: finite elements}

In this section, we analyze the numerical approximation of the solution of problem \eqref{ec-principales} using finite element approximations of problem \eqref{NV_dato_ge}, with appropriate regular data $\bg_\eps$.

Let $\O$ be a domain satisfying Assumption \ref{as:as1} and with a $C^{k+1}$ piecewise smooth boundary, with $k\geq 1$ sufficiently large to fulfill our requirements. Let $\matT_h$ be a partition of  $\O$ which represent convincingly the domain, i.e., we use curved triangles along the curved part of the boundary (as, for example, the curved triangles introduced by Zlamal in \cite{zlamal1973}) and classical triangles in the rest of the domain. We use the same notation to call the curved elements as for classical triangles.

Let  $\{\mathcal T_h\}$ be a family of triangulations of $\O$ such that
any two triangles in ${\mathcal T}_h$ share at most a vertex or an edge and each element $T \in {\mathcal T}_h$. 
We denote by $ h_T$ and $\theta_T$ the greatest side and smallest angle of the triangle $T$.
We assume that the family of triangulations $\{ {\mathcal T}_h \}$ satisfies a minimum angle condition, i.e., there exists a constant $\theta_0 > 0$ such that $\theta_T \geq \theta_0$, for any $T \in {\mathcal T}_h$.  We also denote by 
$h = \max_{T\in {\mathcal T}_h} h_T$, i.e.,  the maximum diameter of the elements in $\matT_h$. 

We take $\bV_h \subset \bH^1(\O)$ and $Q_h \subset L^2_0(\O)$. We make further \emph{assumptions} for the finite element spaces $\bV_h\times Q_h$:\\
\begin{itemize}
\item[A1:] For each $\bw \in \bH^2(\O)\cap\{\phi\in\bH^1_0(\O):\div\phi=0\}$ and $q \in H^1(\O) \cap L^2_0(\O)$, there exist approximations $I_h(\bw) \in \bV_h$ and $\rho_h(q) \in Q_h$, such that
$$|\bw - I_h(\bw)|_{H^1(\O)} \leq Ch^r\|\bw\|_{H^2(\O)}, \quad \|q - \rho_h(q)\|_{L^2(\O)} \leq Ch^r\|q\|_{H^1(\O)},$$
for some $r >0$.

\item[A2:] And the so-called inf–sup inequality, i.e.,
\end{itemize}
\begin{align*}
\sup_{\bw_h\in\bV_h}\frac{\int_\O q_h\div\bw_h}{\|\bw_h\|_{H^1(\O)}}&\geq \beta\|q_h\|_{L^2(\O)} \quad \forall q_h\in Q_h\\
\sup_{q_h\in Q_h}\frac{\int_\O q_h\div\bw_h}{\|q_h\|_{L^2(\O)}}&\geq \hat{\beta}\|\bw_h\|_{H^1(\O)} \quad \forall \bw_h\in \bV_h
\end{align*}
 for some $\beta, \hat{\beta}>0$.

There are several families of elements present in the literature that satisfy conditions A1 and A2 (see, for example, \cite{Ar2018,AB2010,ABF1984,BDG2006,Boffi1995,BBDDFF2008}  and the references therein).
From now on, let us consider $\bg_\eps=\bg_h$, i.e. a regular approximation of $\bg$, such that $\bg_h$ is the trace of a function in $\bV_h$ and 

\begin{equation}\label{estimation_g-g_h}
\|\bg-\bg_h\|_{L^2(\Gamma)}\leq Ch^s ,    
\end{equation}
for some $s>0$.

The finite element
approximation of the Navier–Stokes equations \eqref{NV_dato_ge} is to find $(\bu_h,p_h)\in \bV_h\times Q_h$, with $\bu_h = \bg_h$ in $\Gamma$, such that
\begin{equation}\label{aprox-fem-ge}
\left\{\begin{array}{rll}
     a(\bu_h,\phi_h)+c(\bu_h,\bu_h,\phi_h)-b(\phi_h,p_h) &= \int_\O\bf\cdot\phi_h \quad &\forall \phi_h\in\bV_h\cap \bH^1_0(\O)  \\
     b(\bu_h,q_h) &= 0 \quad &\forall q_h\in Q_h
\end{array}
\right.
\end{equation}
  
The existence and uniqueness of problem \eqref{aprox-fem-ge} is studied, for example, in \cite{wang2016,gunzburger1983}, for conforming finite element methods that satisfy A1 and A2, under appropriate conditions on the data. 

In the next section, we consider some possible choices for the discrete spaces $\bV_h$, $Q_h$ and for the function $\bg_h$. 

The following result provides an useful a priori error estimate for the finite element approximation of Navier-Stokes problem with regular data satisfying the conditions of Theorem \ref{teo-existyunic-NS}.

\begin{prop}\label{estimate_result}
 Let $\bf\in\bH^{-1}(\Omega)$. If A1 and A2 hold, we get the following error estimates 
    \begin{equation*}\label{estimation_1}
        \|\bv-\bu_h\|_{L^2(\O)} \leq Ch^{2r}, 
    \end{equation*}
    \begin{equation}\label{estimation_2}
        |\bv-\bu_h|_{H^1(\O)}+\|q-p_h\|_{L^2(\O)}\leq Ch^r.
    \end{equation}
    with $(\bv,q)$ the solution of  \eqref{forma_debil_regular} and $(\bu_h,p_h)$ the solution of \eqref{aprox-fem-ge}.
\end{prop}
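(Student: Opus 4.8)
The plan is to follow the classical two--step error analysis for mixed finite element approximations of the stationary Navier--Stokes equations (in the spirit of \cite{girault1986,gunzburger1983,temam1977}): first derive a quasi--optimal $\bH^1$ estimate for the velocity together with the $L^2$ bound for the pressure, using the discrete inf--sup condition A3 and a contraction argument resting on the smallness hypothesis already imposed throughout the paper (the condition $\matC\|\hat{\bf}\|_{H^{-1}(\O)}/(\nu-\eps)^2<1$ of Theorem \ref{teo-existyunic-NS} and Proposition \ref{main_result_1}, which keeps us inside the uniqueness branch of \eqref{forma_debil_regular}); and then obtain the $L^2$ velocity estimate by an Aubin--Nitsche duality argument.

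For the first step I would note that, since $\bv=\bg_h=\bu_h$ on $\Gamma$, the error $\bv-\bu_h$ lies in $\bH^1_0(\O)$, and subtracting \eqref{aprox-fem-ge} from the variational form \eqref{forma_debil_regular} yields, for every $\phi_h\in\bV_h\cap\bH^1_0(\O)$,
\[
a(\bv-\bu_h,\phi_h)+c(\bv-\bu_h,\bv,\phi_h)+c(\bu_h,\bv-\bu_h,\phi_h)-b(\phi_h,q-p_h)=0 .
\]
Then I would choose a discretely divergence--free approximation $\pi_h\bv\in\bV_h$ of $\bv$ --- equal to $\bg_h$ on $\Gamma$, with $b(\pi_h\bv,s_h)=0$ for all $s_h\in Q_h$, obtained from A1 and the Fortin operator associated with A3 and satisfying $|\bv-\pi_h\bv|_{H^1(\O)}\le Ch^r\|\bv\|_{H^2(\O)}$ --- set $\bz_h:=\pi_h\bv-\bu_h\in\bV_h\cap\bH^1_0(\O)$, and test the error identity with $\phi_h=\bz_h$. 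Coercivity of $a$ gives $\nu|\bz_h|_{H^1(\O)}^2$; property (c) of Lemma \ref{properties_of_c} removes $c(\bu_h,\bz_h,\bz_h)$ (using that $\bu_h$ is divergence--free for the spaces under consideration, or after antisymmetrizing $c$ in the scheme); the term $b(\bz_h,q-p_h)$ is $O(h^r)\,|\bz_h|_{H^1(\O)}$ since $\bz_h$ is discretely divergence--free; and I would estimate the contributions of $\bv-\pi_h\bv$ and of $c(\bz_h,\bv,\bz_h)$ through Lemma \ref{properties_of_c}(b), \eqref{inmersion} and the a priori bound on $\|\bv\|_{H^1(\O)}$ coming from \eqref{estimation_nonhomog}. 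The borderline term $c(\bz_h,\bv,\bz_h)\le\matC|\bv|_{H^1(\O)}|\bz_h|_{H^1(\O)}^2$ would be absorbed into the left--hand side by the smallness hypothesis (which makes $\nu-\matC|\bv|_{H^1(\O)}>0$ up to fixed constants), giving $|\bz_h|_{H^1(\O)}\le Ch^r\|\bv\|_{H^2(\O)}$ and hence $|\bv-\bu_h|_{H^1(\O)}\le Ch^r$ by the triangle inequality. The pressure estimate then follows from A3: for $s_h\in Q_h$ one has $\beta\|s_h-p_h\|_{L^2(\O)}\le\sup_{\phi_h\in\bV_h\cap\bH^1_0(\O)}b(\phi_h,s_h-p_h)/\|\phi_h\|_{H^1(\O)}$, and the error identity rewrites the numerator as a combination of the $a$-- and $c$--terms (already $O(h^r)$) plus $b(\phi_h,s_h-q)$; taking $s_h=\rho_h(q)$ and adding $\|q-\rho_h(q)\|_{L^2(\O)}\le Ch^r$ from A1 yields \eqref{estimation_2}.

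For the $L^2$ estimate I would argue by duality. Introduce the linearized adjoint (Oseen--type) problem of finding $(\bw,\xi)$ with $-\nu\Delta\bw-(\bv\cdot\nabla)\bw+(\nabla\bv)^{\top}\bw+\nabla\xi=\bv-\bu_h$, $\div\bw=0$ in $\O$, $\bw=0$ on $\Gamma$; inside the uniqueness regime this problem is well posed and, on the present ($C^{k+1}$ piecewise smooth, or convex polygonal) domain, satisfies $\|\bw\|_{H^2(\O)}+\|\xi\|_{H^1(\O)}\le C\|\bv-\bu_h\|_{L^2(\O)}$. Pairing this problem with $\bv-\bu_h$ and integrating by parts gives
\[
\|\bv-\bu_h\|_{L^2(\O)}^2=a(\bv-\bu_h,\bw)+c(\bv-\bu_h,\bv,\bw)+c(\bv,\bv-\bu_h,\bw)-b(\bw,q-p_h)-b(\bv-\bu_h,\xi)+c(\bv-\bu_h,\bv-\bu_h,\bw),
\]
where the last term is the genuinely quadratic remainder which, by Lemma \ref{properties_of_c}(b) and the first step, is $O(|\bv-\bu_h|_{H^1(\O)}^2\|\bw\|_{H^1(\O)})=O(h^{2r}\|\bv-\bu_h\|_{L^2(\O)})$. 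The remaining $(\bw,\xi)$--linear terms reproduce, up to that remainder, the left--hand side of the error identity with the continuous $\bw$ in place of $\phi_h$; subtracting that identity at $\phi_h=\pi_h(\bw)$ lets me replace $\bw$ by $\bw-\pi_h(\bw)$ and $\xi$ by $\xi-\rho_h(\xi)$ everywhere, so that A1 supplies a factor $Ch^r\|\bw\|_{H^2(\O)}$ multiplying the quantities $|\bv-\bu_h|_{H^1(\O)}$ and $\|q-p_h\|_{L^2(\O)}$, both already of order $h^r$ by the first step. Collecting and using $\|\bw\|_{H^2(\O)}\le C\|\bv-\bu_h\|_{L^2(\O)}$ then gives $\|\bv-\bu_h\|_{L^2(\O)}^2\le Ch^{2r}\|\bv-\bu_h\|_{L^2(\O)}$, i.e.\ $\|\bv-\bu_h\|_{L^2(\O)}\le Ch^{2r}$.

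The hard part is, as always for the Navier--Stokes equations, the nonlinear term: I must keep the smallness hypothesis explicit so that $c(\bz_h,\bv,\bz_h)$ --- and, in the dual step, $c(\bv-\bu_h,\bv,\cdot)$ --- can be absorbed, which confines the whole argument to the uniqueness branch of the continuous problem; and I must construct the discretely divergence--free approximation of the \emph{non--homogeneous} velocity $\bv$ so that the pressure decouples cleanly, which is precisely where A3 (through a Fortin--type operator), and, for bounding discrete quantities inside the nonlinear estimates, the inverse inequality A2, enter. A further delicate point is that the $L^2$ bound relies on $H^2$--regularity of the \emph{linearized dual} problem, which is somewhat stronger than Assumption \ref{as:as1} itself but is available here because the boundary is $C^{k+1}$ piecewise smooth (or convex polygonal) and the data are small.
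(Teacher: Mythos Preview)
The paper does not actually prove this proposition: its entire proof is the single line ``See \cite[Theorem 3.2]{wang2016}.'' So there is nothing to compare at the level of argument --- the paper simply imports the result from the literature.

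Your proposal supplies what the citation hides: the classical two--step analysis (quasi--optimal $H^1$/pressure bound via coercivity, the discrete inf--sup, and absorption of the trilinear term under the smallness hypothesis; then an Aubin--Nitsche duality for the $L^2$ velocity estimate). This is precisely the route taken in the standard references \cite{girault1986,gunzburger1983} and in \cite{wang2016} for the non--homogeneous case, so your sketch is consistent with the paper's intent. Two points deserve care if you flesh this out. First, in the scheme \eqref{aprox-fem-ge} the trilinear form is \emph{not} skew--symmetrized, and $\bu_h$ is only discretely divergence--free, so $c(\bu_h,\bz_h,\bz_h)$ does not vanish; you cannot simply invoke Lemma \ref{properties_of_c}(c) there, and the parenthetical ``or after antisymmetrizing $c$ in the scheme'' changes the discrete problem. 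The argument in \cite{wang2016} handles this directly for the non--skew form, so you should follow that rather than assume it away. Second, the duality step needs $H^2\times H^1$ regularity of the linearized adjoint, which is indeed stronger than Assumption \ref{as:as1}; you correctly flag this, and it is compatible with the boundary smoothness imposed at the start of Section \ref{sec4: finite elements}.
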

\begin{proof}
    See, for example, \cite[Theorem 3.2]{wang2016}.
\end{proof}

Using this last result together with Theorem \ref{main_result_2}, we can obtain the following main result.

\begin{theorem}\label{main_result_3}
 Let $\bf\in\bH^{-1}(\O)$ be and $\bg_h$ satisfying \eqref{cond-compatibilidad} and \eqref{estimation_g-g_h}.
    Let $\bu\in\bL_\Gamma$ be the solution of the problem \eqref{ec-principales}  and $\bu_h$ be the solution of \eqref{aprox-fem-ge}. If A1 and A2 hold, then, there exists a constant $C$ such that 
    \begin{equation*}
        \|\bu-\bu_h\|_{L^4(\O)}\leq C h^{\min\{r,s\}}
     \end{equation*}       
       
\end{theorem}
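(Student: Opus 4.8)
The plan is to combine the triangle inequality with the two error estimates already established: Theorem~\ref{main_result_2}, which controls $\|\bu - \bv\|_{L^4(\O)}$ in terms of $\|\bg - \bg_\eps\|_{L^2(\Gamma)}$, and Proposition~\ref{estimate_result}, which controls $|\bv - \bu_h|_{H^1(\O)}$ in terms of $h^r$. Writing $\bg_\eps = \bg_h$, we split
\begin{equation*}
\|\bu - \bu_h\|_{L^4(\O)} \leq \|\bu - \bv\|_{L^4(\O)} + \|\bv - \bu_h\|_{L^4(\O)},
\end{equation*}
where $\bv$ is the weak solution of \eqref{NV_dato_ge} with boundary data $\bg_h$. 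For the second term I would use the continuous embedding $\bH^1(\O) \hookrightarrow \bL^4(\O)$ from \eqref{inmersion}, so that $\|\bv - \bu_h\|_{L^4(\O)} \leq C_I \|\bv - \bu_h\|_{H^1(\O)}$, and then apply the $H^1$-estimate $|\bv - \bu_h|_{H^1(\O)} \leq Ch^r$ from \eqref{estimation_2} (noting that $\bv - \bu_h$ vanishes on $\Gamma$ since both equal $\bg_h$ there, so the seminorm controls the full norm by Poincaré). This bounds the second term by $Ch^r$.

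For the first term, Theorem~\ref{main_result_2} gives
\begin{equation*}
\|\bu - \bv\|_{L^4(\O)} \leq C\,\frac{C_1\|\bg - \bg_h\|_{L^2(\Gamma)}}{1 - C_1(1/\nu)\|\bg - \bg_h\|_{L^2(\Gamma)}}.
\end{equation*}
Since $\|\bg - \bg_h\|_{L^2(\Gamma)} \leq Ch^s \to 0$ as $h \to 0$ by \eqref{estimation_g-g_h}, for $h$ small enough the denominator is bounded below by a fixed positive constant (say $1/2$), so the right-hand side is $O(\|\bg - \bg_h\|_{L^2(\Gamma)}) = O(h^s)$. Combining the two pieces yields $\|\bu - \bu_h\|_{L^4(\O)} \leq C(h^r + h^s) \leq C h^{\min\{r,s\}}$, which is the claimed estimate.

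A few technical points deserve care. First, I must check that the hypotheses needed to invoke Theorem~\ref{main_result_2} and Proposition~\ref{estimate_result} are in force: Theorem~\ref{main_result_2} requires $\|\bg - \bg_h\|_{L^2(\Gamma)} < \eps$, which by \eqref{estimation_g-g_h} holds for $h$ sufficiently small once $\eps$ is fixed; and the smallness condition $\matC\|\hat{\bf}\|_{H^{-1}(\O)}/(\nu-\eps)^2 < 1$ underlying the existence and uniqueness of $\bv$ (Theorem~\ref{teo-existyunic-NS}, Proposition~\ref{main_result_1}) must be assumed — this should perhaps be stated explicitly as a hypothesis, since it is implicit in \eqref{estimation_2} as well. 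Second, I would make sure the chain of constants $C$, $C_I$, $C_1$ is absorbed cleanly into a single generic $C$ at the end, with the understanding that $h$ is taken below a threshold $h_0$ depending on $\nu$, $\bf$, $\bg$ and $\eps$ but not on $h$. The main (mild) obstacle is simply being precise about this threshold and about which smallness/uniqueness conditions propagate through the chain of earlier results; no genuinely new estimate is needed, the proof being essentially an assembly of \eqref{inmersion}, \eqref{estimation_2}, \eqref{estimation_g-g_h}, and Theorem~\ref{main_result_2}.
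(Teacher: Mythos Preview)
Your proposal is correct and follows essentially the same route as the paper: split via the intermediate weak solution $\bv$ of \eqref{NV_dato_ge} with data $\bg_h$, bound $\|\bu-\bv\|_{L^4(\O)}$ by Theorem~\ref{main_result_2} together with \eqref{estimation_g-g_h}, and bound $\|\bv-\bu_h\|_{L^4(\O)}$ via the embedding \eqref{inmersion} and Proposition~\ref{estimate_result}. Your extra care in invoking Poincar\'e (since $\bv-\bu_h\in\bH^1_0(\O)$) and in making the smallness threshold on $h$ explicit is a welcome clarification of steps the paper leaves implicit.
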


\begin{proof}
    Let $\bv\in\bH^1(\O)$ solution of the problem \eqref{forma_debil_regular}. For \eqref{inmersion} 
    $$\|\bv-\bu_h\|_{L^4(\O)}\leq C_I \|\bv-\bu_h\|_{H^1(\O)}$$
  Using Theorem \ref{main_result_2} and \eqref{estimation_2} we have
    \begin{align*}
        \|\bu-\bu_h\|_{L^4(\O)}&=\|\bu-\bv+\bv-\bu_h\|_{L^4(\O)}\\
        &\leq \|\bu-\bv\|_{L^4(\O)}+\|\bv-\bu_h\|_{L^4(\O)}\\
        &\leq C\frac{C_1\|\bg-\bg_h\|_{L^2(\Gamma)}}{1-C_1(1/ \nu)\|\bg-\bg_h\|_{L^2(\Gamma)}} + C_I\|\bv-\bu_h\|_{H^1(\O)}\\
        &\leq C\frac{C_1\|\bg-\bg_h\|_{L^2(\Gamma)}}{1-C_1(1/ \nu)\|\bg-\bg_h\|_{L^2(\Gamma)}}+O(h^r) 
    \end{align*}
Assuming that \eqref{estimation_g-g_h} holds, we get $\|\bu-\bu_h\|_{L^4(\O)}\leq C h^{\min\{r,s\}}$, for $h$ small enough, and the proof concludes.
\end{proof}

\section{Numerical experiments}\label{sec5: numerical experiments}

In this section we discuss some numerical aspects of our approach and present numerical experiments for the classic \textit{lid-driven cavity flow} problem, which is a typical benchmark problem in computational fluid dynamics \cite{duran2020,hamouda2017,AB2010}.

The first issue is to choose finite elements spaces $\bV_h$ and $Q_h$ satisfying conditions A1 and A2. 
We will consider the following:

\begin{itemize}

\item \textit{Taylor-Hood:} $\bV_h=(P_2(\matT_h))^2$ and $Q_h = P_1(\matT_h)\cap L^2_0(\O)$.
\item \textit{Mini-Elements:} $\bV_h=(P_1^b(\matT_h))^2$ and $Q_h = P_1(\matT_h)\cap L^2_0(\O)$ 

where, if for an element $T$, $b_T \in P_3$ is the classic cubic bubble function vanishing on $\partial T$, we set $$P_1^b(T)=P_1(T)\oplus span\{b_T(\cdot)\}$$

\item \textit{$\bP_1P_1$ stabilized:}  the method consists of find $(\bu_h,p_h)\in \bV_h\times Q_h$ with $\bu_h = \bg_h$ in $\Gamma$, $\bV_h = (P_1(\matT_h))^2$ and $Q_h=P_1(\matT_h)\cap L^2_0(\O)$ such that
\begin{equation*}\label{prob-aprox-fem}
\left\{\begin{array}{rll}
     a(\bu_h,\phi_h)+c(\bu_h,\bu_h,\phi_h)-b(\phi_h,p_h) &= \int_\O\bf\cdot\phi_h &\forall \phi_h\in\bV_h\cap \bH^1_0(\O)  \\
     b(\bu_h,q_h)-G(p_h,q_h) &= 0 & \forall q_h\in Q_h
\end{array}
\right.
\end{equation*}
where $$G(p,q)=\int_\O (p-\Pi p)(q-\Pi q)dx,$$
with $\Pi q|_T = \frac{1}{|T|}\int_T q dx, \forall T\in\matT_h$. This method was proposed in \cite{BDG2006} for the Stokes problem and studied for the Navier-Stokes problem in \cite{zheng,HL2008}.

\end{itemize}
After one of those methods is chosen, another question is how to select an appropriate function $\bg_h$ that satisfies \eqref{estimation_g-g_h} and the compatibility condition \eqref{cond-compatibilidad}.
One possibility is to take $\bg_h$ as the Cartensen interpolator of the function $\bg$ \cite{cartensen}. Another option is to take $\bg_h$ as the Lagrange interpolator but, unfortunately, this choice doesn't verify \eqref{cond-compatibilidad}. In \cite[lemma 3.1]{duran2020} the authors propose a modification of the Lagrange interpolator so that \eqref{cond-compatibilidad} holds.

Let $\O=[-1, 1]\times [-1, 1]$, $\bf=(0,0)$, $\nu=1$ and
\begin{equation*}
    \bg(x,y) = \left\{\begin{array}{cc}
         (0,0) & -1<x<1, \quad y=-1 \\
         (1,0) & y=1 
    \end{array}
    \right.
\end{equation*}

We will obtain the numerical approximations applying the three finite elements methods described above 
and choosing $\bg_h$ as the Lagrange interpolator.
We observe that, in this case, the compatibility condition is automatically verifed and furthermore, following \cite[Proposition 3.1]{duran2020}, we have that
\begin{equation*}\label{estimation_g-g_h_duran}
\|\bg-\bg_h\|_{L^2(\Gamma)}\leq Ch^{1/2}
\end{equation*}

In consequence,  applying Theorem \ref {main_result_3}, since  $r=1$ and $s=1/2$ for the proposed methods, we can affirm that

\begin{equation}\label{final-estimation}
    \|\bu-\bu_h\|_{L^4(\O)}\leq C h^{1/2}
\end{equation}

Other numerical aspect to consider is the 
iterative methods to approximate the discrete solution. The typical  algorithm to solve the Navier-Stokes problem is based on a fixed-point iterative algorithm. One possibility, that we will apply in our numerical experiments, is to use a Newton scheme, which has been studied for both the homogeneous \cite{girault1986,karakashian1982} and non-homogeneous cases \cite{wang2016,gunzburger1983}. 

Let $(\bu^n_h,p^n_h)$ $(n=0,1,2,...)$ be the sequence of approximations to $(\bu_h,p_h)$ defined by Newton's method defined as follows.

Given $(\bu^0_h,p^0_h)\in\bV_h\times Q_h$ such that $\bu^0_h|_\Gamma=\bg_h$, the sequence $(\bu^n_h,p^n_h)$ is defined by:

\begin{align}\label{newton-method}
    a(\bu^n_h,\phi_h)+c(\bu^n_h,\bu^{n-1}_h,\phi_h)&+c(\bu^{n-1}_h,\bu^n_h,\phi_h) -b(\phi_h,p^n_h) \nonumber \\
    &=\int_\O\bf\cdot\phi_h + c(\bu^{n-1}_h,\bu^{n-1}_h,\phi_h)
      \quad \forall \phi_h\in\bV_h\cap \bH^1_0(\O)\\
     b(\bu^n_h,q_h) &= 0 \quad \forall q_h\in Q_h \nonumber
\end{align}

In \cite[Proposition 3.4 and Proposition 3.5]{gunzburger1983} it is shown that if $\bu^0_h$ is close enough to $\bu_h$, then the sequence defined by \eqref{newton-method} is unique and converges quadratically to $\bu_h$. In other words, there exists a $\delta >0$ such that if $\|\bu_0-\bu_h\|_{H^1(\O)}\leq\delta$, then for $n\geq 1$, 
$$
    \|\bu^n_h-\bu_h\|_{H^1(\O)}\leq \frac{1}{\delta^{2n-1}}\|\bu^0_h-\bu_h\|^{2}_{H^1(\O)} 
    $$

Further, if $\|\bu^0_h-\bu_h\|_{H^1(\O)}=\delta\eta$, with $0<\eta<1$, then for $n\geq 1$
\begin{equation}\label{iterative-error2}
    \|\bu^n_h-\bu_h\|_{H^1(\O)}\leq \delta\eta^{2n}
\end{equation} 

\begin{corollary}
    Under the conditions of the Theorem \ref{main_result_3} and let  $(\bu^*_h,p^*_h)$ be the limit of the sequence given by \eqref{newton-method}. Then $$\|\bu-\bu^*_n\|_{L^4(\O)}\leq Ch^{1/2}.$$
\end{corollary}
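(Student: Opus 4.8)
The plan is to combine the a priori error estimate from Theorem~\ref{main_result_3} with the quadratic convergence of the Newton iteration stated in \eqref{iterative-error2}, using the triangle inequality in $\bL^4(\O)$. The point is that $\bu^*_h$, being the limit of the Newton sequence, coincides with $\bu_h$ (the unique solution of \eqref{aprox-fem-ge}) whenever the initial guess $\bu^0_h$ lies in the basin of attraction; alternatively, one keeps track of the iteration error explicitly and lets $n\to\infty$.

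First I would write, for any $n\geq 1$,
\begin{equation*}
\|\bu-\bu^n_h\|_{L^4(\O)}\leq \|\bu-\bu_h\|_{L^4(\O)}+\|\bu_h-\bu^n_h\|_{L^4(\O)},
\end{equation*}
where $\bu_h$ is the solution of \eqref{aprox-fem-ge}. For the first term, Theorem~\ref{main_result_3} together with the choice $\bg_h$ the Lagrange interpolator (so that $s=1/2$, $r=1$) gives $\|\bu-\bu_h\|_{L^4(\O)}\leq Ch^{1/2}$, as already recorded in \eqref{final-estimation}. For the second term, I would use the continuous embedding \eqref{inmersion}, $\|\bu_h-\bu^n_h\|_{L^4(\O)}\leq C_I\|\bu_h-\bu^n_h\|_{H^1(\O)}$, and then \eqref{iterative-error2}: if the initial datum satisfies $\|\bu^0_h-\bu_h\|_{H^1(\O)}=\delta\eta$ with $0<\eta<1$, then $\|\bu^n_h-\bu_h\|_{H^1(\O)}\leq\delta\eta^{2^n}$, which tends to $0$. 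Passing to the limit $n\to\infty$ (which exists by the convergence of the Newton scheme and equals $\bu^*_h$), the second term vanishes, leaving $\|\bu-\bu^*_h\|_{L^4(\O)}\leq Ch^{1/2}$.

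The only subtlety — and the step I expect to require the most care — is reconciling the hypothesis of the Newton convergence result (that $\bu^0_h$ be within distance $\delta$ of $\bu_h$, where $\delta$ depends on the data and the discrete spaces) with the statement being uniform in $h$. Strictly, one should note that $\delta$ can be taken independent of $h$ under the standing smallness assumptions on $\bf$ and $\bg$ (the same assumptions that guarantee uniqueness of $\bu_h$ via A1--A3, cf. \cite{gunzburger1983,wang2016}), so that a fixed admissible $\bu^0_h$ works for all sufficiently small $h$; then $\bu^*_h=\bu_h$ and the bound is immediate from \eqref{final-estimation}. I would state the corollary's proof in this form: identify $\bu^*_h$ with $\bu_h$ by uniqueness, then quote \eqref{final-estimation} directly, remarking that the explicit iteration estimate \eqref{iterative-error2} shows the convergence is in fact quadratic so no accuracy is lost. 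This keeps the argument short and avoids any $h$-dependent bookkeeping of $\eta$ and $\delta$.
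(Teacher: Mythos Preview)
Your proposal is correct and follows essentially the same route as the paper: split $\|\bu-\bu^*_h\|_{L^4(\O)}$ via the triangle inequality into the a priori error $\|\bu-\bu_h\|_{L^4(\O)}$ (controlled by \eqref{final-estimation}) and the Newton remainder $\|\bu_h-\bu^*_h\|$ (controlled by \eqref{iterative-error2} through the embedding \eqref{inmersion}). Your observation that $\bu^*_h=\bu_h$ by uniqueness, so the second term simply vanishes, is in fact cleaner than the paper's own phrasing, and your caveat about $\delta$ being uniform in $h$ is a point the paper leaves implicit.
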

\begin{proof}
From \eqref{iterative-error2}, we can assume that
there exists a constant $C$ such that $\|\bu_h-\bu^*_h\|_{L^4(\O)} \leq Ch$.
Then, since

$$\|\bu-\bu^*_h\|_{L^4(\O)}\leq \|\bu-\bu_h\|_{L^4(\O)}+\|\bu_h-\bu^*_h\|_{L^4(\O)}\leq \|\bu-\bu_h\|_{L^4(\O)}+C\|\bu_h-\bu^*_h\|_{H^1(\O)}.$$
the proof concludes as a direct consequence of \eqref{final-estimation}.

\end{proof}

The numerical implementation is based on the $NGSolve$ Python package \cite{NGSolve}.
For the application of Newton’s method, a stopping criterion must be prescribed in terms of a given tolerance. In the present work, we employ the relative error between two consecutive iterates: 

$$\frac{|\bu^n_h-\bu^{n-1}_h|_{H^1(\O)}}{|\bu^n_h|_{H^1(\O)}}<tol$$ with $tol=10^{-8}$. As an initial guess for the Newton iteration, we take the solution of the corresponding Stokes problem.

Next, for the different three methods under consideration, we estimate the convergence errors for $\bu$ in the $L^4(\O)$ norm. Since we do not know the exact solution and that our numerical solution is the result of an iterative method, the $L^4(\O)$-error is calculated as the difference between the limits solutions obtained in two consecutive refinements. 
In fact, let us note as $\bu_h^{\star}$ and  $\bu_{h/2}^{\star}$ the limits of the sequence $\bu^n_h$ for a mesh of size $h$ and $h/2$ respectively. Then, we define successive errors as,
$$e_{L^4}(\bu)= \| \bu^{\star}_{h} - \bu^{\star}_{h/2} \|_{L^4(\O)}$$

Figure \ref{convergence_order} and table \ref{tabla1} show the graphics of $\log(h)$ vs $\log(e_{L^4})$ for  $\mathbf{P}_2P_1$, $\mathbf{P}^b_1P_1$ and stabilized $\mathbf{P}_1P_1$ elements respectively, achieved
 with uniform mesh refinement, and also exhibit 
the experimental order of convergence (eoc) with respect to $h$, which is computed by  comparing the errors for two consecutive meshes.
We can observe that, in all the cases,  the numerical results confirm the theorical prediction.

\begin{figure}[H]
    \centering
    \includegraphics[width=0.65\linewidth]{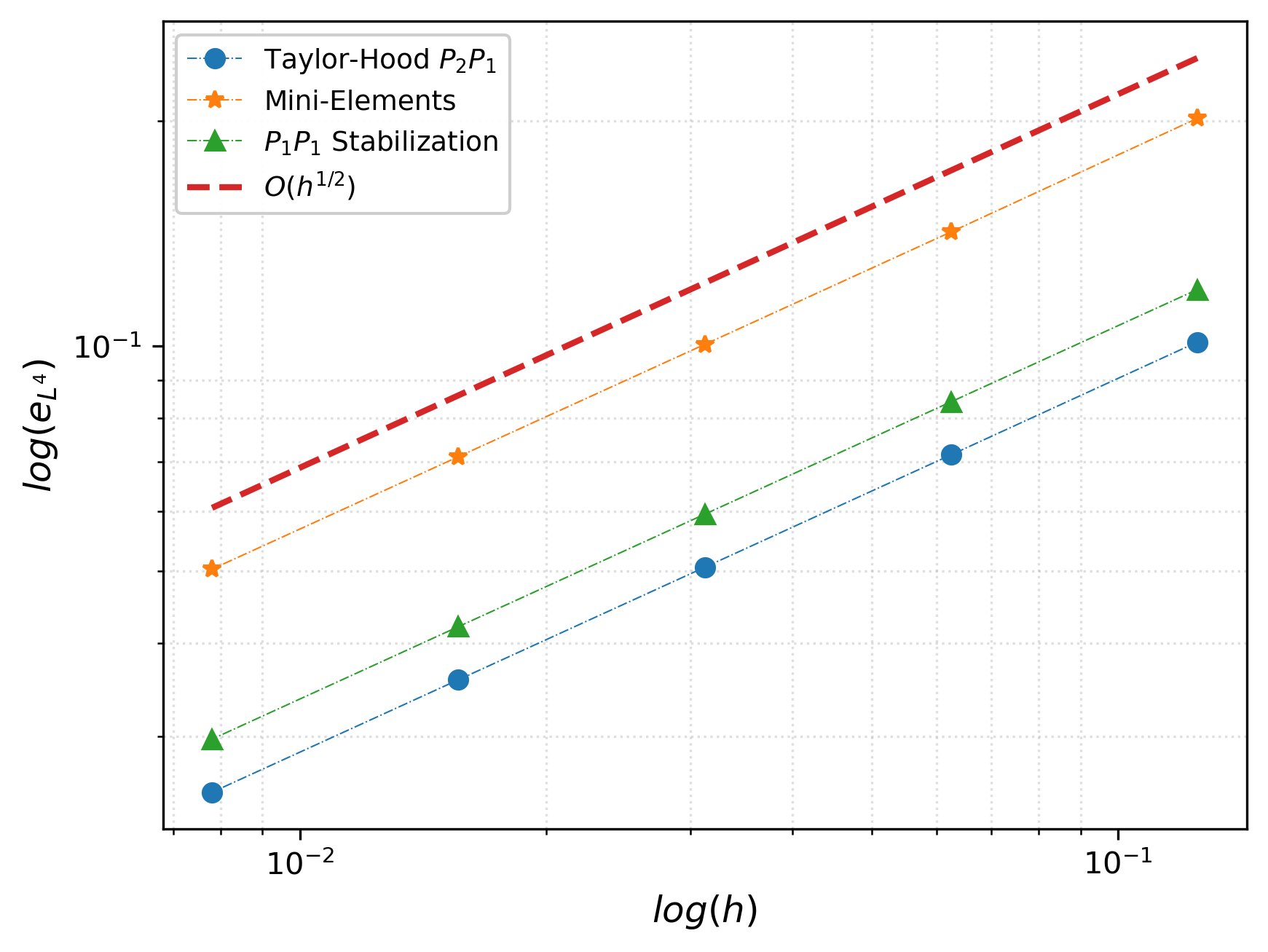}
    \caption{Convergence order analysis.}
    \label{convergence_order}
\end{figure}

\begin{table}[h]
\caption{$L^4$-norm errors and experimental orders of convergence for the approximation obtained with uniform mesh refinements, for different finite element discretizations.}\label{tabla1}
\begin{tabular*}{\textwidth}{@{\extracolsep\fill}lccccccccc}
\toprule%
& \multicolumn{2}{@{}c@{}}{$\mathbf{P}_2P_1$} & \multicolumn{2}{@{}c@{}}{$\mathbf{P}^b_1P_1$} & \multicolumn{2}{@{}c@{}}{$\mathbf{P}_1P_1$ stabilized}\\ \cmidrule(lr){2-3} \cmidrule(lr){4-5} \cmidrule(lr){6-7}
$h$ &  $e_{L^4}$ & eoc &  $e_{L^4}$ & eoc & $e_{L^4}$ & eoc  \\
\midrule
1/4  & 1.0116e-01  & -        & 2.0160e-01 & -       & 1.1894e-01 & -\\
1/8  & 7.1461e-02  & 0.50146  & 1.4205e-01 & 0.50514 & 8.4146e-02 & 0.49931\\
1/16 & 5.0534e-02  & 0.49991  & 1.0044e-01 & 0.50006 & 5.9510e-02 & 0.49978\\
1/32 & 3.5729e-02  & 0.50015  & 7.1057e-02 & 0.49924 & 4.2082e-02 & 0.49992\\
1/64  & 2.5262e-02 & 0.50010  & 5.0247e-02 & 0.49995 & 2.9757e-02 & 0.49998\\
\botrule
\end{tabular*}
\end{table}

\section{Conclusions}

The finite numerical approximation of the stationary Navier-Stokes equations with non-smooth data have been analyzed. We have compared the original problem with its corresponding version using smooth data, closely related to the original data, and the associated finite element approach.
Convergence and optimal a priori error estimates have been rigorously proven under suitable assumptions regarding the domain and finite element spaces. The reported numerical experiments for the \textit{lid-driven cavity flow} problem, with different finite element methods, have illustrated the effectiveness of our approach.

\bmhead{Acknowledgements}
We thank Ricardo G. Durán for his helpful references on a priori estimates for the Stokes problem, and Ariel L. Lombardi for his valuable suggestions regarding numerical implementation.

\section*{Declarations}

\noindent\textbf{Funding.} This work was partially supported by grant PICT 2018-3017, Agencia Nacional de Promoción de la Investigación, el Desarrollo Tecnológico y la Innovación (Argentina), DOI http://dx.doi.org/10.13039/501100021778 and by grant 20020170100056BA, Universidad de Buenos Aires, DOI \\ http://dx.doi.org/10.13039/501100005363.

\noindent\textbf{Competing interests.} The authors have no competing interests to declare that are relevant to the content of this article.\smallskip

\noindent\textbf{Data availability.}
Data sharing not applicable to this article as no datasets were generated or analysed during the current study.

\bibliography{references}
\end{document}